\newtheorem{theorem}{Theorem}[section]
\newtheorem{proposition}[theorem]{Proposition}
\newtheorem{lemma}[theorem]{Lemma}
\newtheorem{corollary}[theorem]{Corollary}
\theoremstyle{definition}
\newtheorem{definition}{Definition}
\begin{document}

\title[On some new model category structures from 
old, on the same underlying category]
{On some new model category structures from old, 
on the same underlying category}
\author[{A. E.} {Stanculescu}]{{Alexandru E.} {Stanculescu}}
\address{\newline \'{U}stav matematiky a statistiky
\newline Masarykova Univerzita,  Kotl\'{a}{\v{r}}sk{\'{a}} 2 \newline
611 37 Brno, Czech Republic}
\email{stanculescu@math.muni.cz}
\thanks{Supported by the project CZ.1.07/2.3.00/20.0003
of the Operational Programme Education for Competitiveness of 
the Ministry of Education, Youth and Sports of the Czech Republic
\newline
\indent}

\begin{abstract}
We make a study of $\ell\ell$-extensions of 
model category structures. We prove an 
existence result of $\ell\ell$-extensions, present 
some specific and some rather formal  results 
about them and give an application 
of the existence result to the homotopy 
theory of categories enriched over a 
monoidal model category.  
\end{abstract}

\maketitle
Given a category and a model structure 
on it, we inadequately say that an 
\emph{extension} of the model structure
is a model structure on the same category 
having more weak equivalences, and that an 
$\ell\ell$-\emph{extension} (or, \emph{extension of type} 
$\ell\ell$) of the model structure is an extension which has 
\emph{l}ess cofibrations and \emph{l}ess fibrations 
than the given one. We allow an $\ell\ell$-extension to have 
the same cofibrations or fibrations as the given model 
structure.

Every category having suitable limits and 
colimits admits a minimal model structure
in which the weak equivalences are the 
isomorphisms and all maps are cofibrations as 
well as fibrations. Any other model structure
on it is an $\ell\ell$-extension of the minimal model
structure. $\ell\ell$-extensions arise in disparate places 
such as the theory of $E^{2}$ model categories 
of Dwyer-Kan-Stover \cite{DKS} or the homotopy 
theory of (multi)categories enriched over monoidal 
model categories or of precategories enriched 
over cartesian model categories, for example.

In this paper we make a study of $\ell\ell$-extensions of 
model structures. Our interest in them comes, in part, 
from the homotopy theory of categories enriched 
over monoidal model categories. Our approach
to the study of $\ell\ell$-extensions is mainly influenced 
by Bousfield's work \cite{Bo}. His work is a vast 
generalization of \cite{DKS} and of others 
(see the Introduction to \cite{Bo}). The main 
result of the present work, Theorem 1.2, can
be seen as a generalization of \cite[Theorem 3.3]{Bo} 
tailored to capture a common feature of 
all of these homotopy theories.

The paper is organized as follows. 
In Section 1 we offer an existence result of 
$\ell\ell$-extensions (see Theorem 1.2). In Section 2 
we present some results about $\ell\ell$-extensions,
some of them which are in a specific context.
Sections 1 and 2 are independent of each other. 
In Section 3 we give an application of our 
existence result of $\ell\ell$-extensions to the 
homotopy theory of categories enriched over a 
monoidal model category (see Theorem 3.1).

\section{An existence result of $\ell\ell$-extensions}
Let $({\rm W,C,F})$ be a model structure on a category 
{\bf M}. {\rm W} stands for the class of weak equivalences,
{\rm C} for the class of cofibrations and {\rm F} for the
class of fibrations. Let ${\rm W}^{\mathcal{G}},
{\rm C}^{\mathcal{G}}$ and ${\rm F}^{\mathcal{G}}$ 
be three classes of maps of {\bf M} 
such that ${\rm W}\subset {\rm W}^{\mathcal{G}}$, 
${\rm C}^{\mathcal{G}}\subseteq {\rm C}$ and 
${\rm F}^{\mathcal{G}}\subseteq {\rm F}$.
In this section we give sufficient conditions for 
$({\rm W}^{\mathcal{G}},{\rm C}^{\mathcal{G}},
{\rm F}^{\mathcal{G}})$ to form a model structure on 
{\bf M} with ${\rm W}^{\mathcal{G}}$
as the class of weak equivalences,  ${\rm C}^{\mathcal{G}}$
as the class of cofibrations and ${\rm F}^{\mathcal{G}}$
as the class of fibrations. The result is stated as Theorem 1.2.

Our approach is heavily influenced by the proof 
of \cite[Theorem 3.3]{Bo}. However, there are 
differences. For example, it will follow from our 
result that everything after \cite[Proposition 3.17]{Bo} 
which pertains to the proof of \cite[Theorem 3.3]{Bo} 
is formal. This is somehow implicit in \cite{Bo}. We 
make it explicit in a way that uses less assumptions;
this difference will turn out to be essential for 
the application that we have in mind (see Section 3).
Our approach also encompasses the proof 
of \cite[Theorem 12.4]{Bo}. 

The next result sets the stage.
\begin{lemma} 
Let {\rm W, C} and {\rm F} be three classes of maps 
of a category {\bf M} with pushouts. We make the 
following assumptions.

$(1)$ {\rm W} has the two out of three property.

$(2)$ {\rm C} is closed under compositions and pushouts.

$(3)$ For every commutative solid arrow diagram in {\bf M}
\[
   \xymatrix{
A \ar[r] \ar[d]_{j} & X \ar[d]^{p}\\
B \ar[r] \ar@{-->} [ur] & Y\\
}
  \]
where $j$ is in ${\rm C}\cap {\rm W}$ and $p$ is in {\rm F},
there is a dotted arrow making everything commute.

$(4)$ Every map $f$ of {\bf M} factors as $f=qi$, where $i$ is a 
map in {\rm C} and $q$ is a map in ${\rm F}\cap {\rm W}$.

Then for every commutative solid arrow diagram in {\bf M}
\[
   \xymatrix{
A \ar[r] \ar[d]_{i} & X \ar[d]^{q}\\
B \ar[r] \ar@{-->} [ur] & Y\\
}
  \]
where $i$ is in {\rm C} and $q$ is in 
${\rm F}\cap {\rm W}$, there is a dotted 
arrow making everything commute.
\end{lemma}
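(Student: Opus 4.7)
The plan is to reduce the stated lifting problem to an instance of hypothesis $(3)$ via a pushout-and-factorization construction. The key observation driving the argument is that after factoring an auxiliary map using $(4)$, the two-out-of-three property will promote a cofibration to a trivial cofibration, bringing $(3)$ into play.

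Given the square with $i\in {\rm C}$ on the left and $q\in {\rm F}\cap {\rm W}$ on the right, write $f$ and $g$ for the top and bottom arrows. First form the pushout
\[
\xymatrix{ A \ar[r]^{f} \ar[d]_{i} & X \ar[d]^{\beta} \\ B \ar[r]_{\alpha} & P }
\]
and let $h\colon P\to Y$ be induced by $g$ and $q$, so that $h\alpha=g$ and $h\beta=q$. By $(2)$, the map $\beta$ lies in ${\rm C}$. Apply $(4)$ to $h$ to obtain a factorization $h=q'j$ with $j\colon P\to Z$ in ${\rm C}$ and $q'\colon Z\to Y$ in ${\rm F}\cap {\rm W}$. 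The composite $j\beta\colon X\to Z$ then satisfies $q'(j\beta)=q$; since $q,q'\in {\rm W}$, hypothesis $(1)$ forces $j\beta\in {\rm W}$, and closure of ${\rm C}$ under composition gives $j\beta\in {\rm C}\cap {\rm W}$.

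Now apply hypothesis $(3)$ to the square with top arrow $\mathrm{id}_X$, left side $j\beta$, right side $q$, and bottom $q'$, which commutes precisely because $q'(j\beta)=q$. This produces a lift $r\colon Z\to X$ with $r\circ (j\beta)=\mathrm{id}_X$ and $q\circ r=q'$. Setting $\ell:=rj\alpha\colon B\to X$, a short diagram chase using the pushout relation $\alpha i=\beta f$ shows that $\ell i=f$ and $q\ell=g$, so $\ell$ solves the original lifting problem. I expect the main obstacle to be spotting that $h\colon P\to Y$ (rather than, say, the top arrow $f$ or the cofibration $i$) is the correct map to feed into $(4)$; once this is recognized the remaining steps are formal.
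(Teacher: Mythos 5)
Your argument is correct, and it reaches the conclusion by a slightly different decomposition than the paper. The paper first factors the top arrow $A\to X$ via $(4)$ into $A\to D$ in ${\rm C}$ followed by $D\to X$ in ${\rm F}\cap{\rm W}$, pushes out $A\to D$ along $i$, and then factors the induced map to $Y$ a second time; the resulting composite $D\to F$ is in ${\rm C}\cap{\rm W}$ by $(1)$ and $(2)$, and $(3)$ is applied to the square with top $D\to X$ and right edge $q$, the lift $F\to X$ then being precomposed with $B\to F$. You instead push out the original square itself, factor the induced map $P\to Y$ only once, and observe that $q'(j\beta)=q$ lets the two-out-of-three property cancel $q$ against $q'$ to put $j\beta$ in ${\rm C}\cap{\rm W}$; then you feed $(3)$ the ``retraction square'' with top $\mathrm{id}_X$ and right edge $q$, and the lift $r$ composed with $j\alpha$ solves the problem. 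Both proofs run on the same three ingredients (pushout along $i$, factorization $(4)$, two-out-of-three, then lifting $(3)$ against $q$), but yours is somewhat leaner: it uses $(4)$ once rather than twice and makes the use of $q\in{\rm W}$ completely explicit in the two-out-of-three step, at the price of a marginally longer diagram chase at the end; the paper's version, closer to Bousfield's template, isolates the ${\rm C}\cap{\rm W}$ map between the two factorizations, which is the shape it reuses later in the proof of Theorem 1.2.
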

\begin{proof}
For the first part, we construct a commutative 
diagram 
\[
\xymatrix{
A \ar[r]  \ar[d]_{i} & D \ar[r] \ar[d]^{j} & X \ar[d]^{q}\\
B \ar[r] & F \ar[r] & Y\\
}
   \]
with $j$ in ${\rm C}\cap {\rm W}$ and then apply (3)
to the right square diagram. Factor (4) the map 
$A\rightarrow X$ into a map $A\rightarrow D$
in {\rm C} followed by a map $D\rightarrow X$ in 
${\rm F}\cap {\rm W}$. Let $E$ be the pushout of 
$A\rightarrow D$ along $i$. By (2) the map 
$D\rightarrow E$ is in {\rm C}. Factor (4) the canonical 
map $E\rightarrow Y$ into a map $E\rightarrow F$ in {\rm C} 
followed by a map $F\rightarrow Y$ in ${\rm F}\cap {\rm W}$. 
Let $j$ be the composite $D\rightarrow E\rightarrow F$. 
$j$ is in {\rm C} by (2) and in {\rm W} by (1). 
\end{proof}
The initial object of a category, when it 
exists, is denoted by $\emptyset$. 
\begin{theorem} {\rm (after Bousfield)}
Let $({\rm W,C,F})$ be a model structure on a 
finitely bicomplete category {\bf M}.
Let ${\rm W}^{\mathcal{G}},{\rm C}^{\mathcal{G}}$ and
${\rm F}^{\mathcal{G}}$ be three classes of maps of 
{\bf M} such that ${\rm W}\subset {\rm W}^{\mathcal{G}}$, 
${\rm C}^{\mathcal{G}}\subseteq {\rm C}$ and 
${\rm F}^{\mathcal{G}}\subseteq {\rm F}$.
We make the following assumptions.

$(1)$ ${\rm W}^{\mathcal{G}}$ has the two out of three property.

$(2)$ ${\rm W}^{\mathcal{G}}$, ${\rm C}^{\mathcal{G}}$ 
and ${\rm F}^{\mathcal{G}}$ are closed under retracts.

$(3)$ ${\rm C}^{\mathcal{G}}$ is closed under compositions 
and pushouts.

$(4)$ For every object $X$ of {\bf M}, the map 
$\emptyset \rightarrow X$ is in ${\rm C}^{\mathcal{G}}$ 
if and only if $X$ is cofibrant.

$(5)$ Every cofibrant object of {\bf M} has an
${\bf M}^{\mathcal{G}}$-cylinder object. This
means that for every cofibrant object $X$ of {\bf M}
there is a factorization 
$$X\sqcup X\overset{i_{0}\sqcup i_{1}}\longrightarrow 
CylX\overset{p}\longrightarrow X$$
of the folding map $X\sqcup X\rightarrow X$
such that $i_{0}\sqcup i_{1}$ is in ${\rm C}^{\mathcal{G}}$
and $p$ is in ${\rm W}^{\mathcal{G}}$.

$(6)$ {\rm W} is closed under pushouts along maps 
from ${\rm C}^{\mathcal{G}}$ between cofibrant objects.

$(7)$ For every commutative solid arrow diagram in {\bf M}
\[
   \xymatrix{
A \ar[r] \ar[d]_{j} & X \ar[d]^{p}\\
B \ar[r] \ar@{-->} [ur] & Y\\
}
  \]
where $j$ is in ${\rm C}^{\mathcal{G}}
\cap {\rm W}^{\mathcal{G}}$ and $p$ is in 
${\rm F}^{\mathcal{G}}$, there is a dotted 
arrow making everything commute.

$(8)$ Every map $f$ of {\bf M} factors as $f=pj$, 
where $j$ is a map in ${\rm C}^{\mathcal{G}}
\cap {\rm W}^{\mathcal{G}}$ and $p$ is a map
in ${\rm F}^{\mathcal{G}}$.

Then the classes ${\rm W}^{\mathcal{G}},{\rm C}^{\mathcal{G}}$
and  ${\rm F}^{\mathcal{G}}$ form a model structure 
on {\bf M}.
\end{theorem}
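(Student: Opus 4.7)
The plan is to verify the closed model structure axioms for $(\mathrm{W}^{\mathcal{G}}, \mathrm{C}^{\mathcal{G}}, \mathrm{F}^{\mathcal{G}})$. Bicompleteness of $\mathbf{M}$ is given; (1) provides the two out of three property; (2) provides retract closure; (7) provides one half of the lifting axiom; and (8) provides one half of the factorization axiom. What remains is (F): every map $f$ of $\mathbf{M}$ factors as a $\mathrm{C}^{\mathcal{G}}$-map followed by an $(\mathrm{F}^{\mathcal{G}} \cap \mathrm{W}^{\mathcal{G}})$-map; and (L): $\mathrm{C}^{\mathcal{G}}$ has the left lifting property with respect to $\mathrm{F}^{\mathcal{G}} \cap \mathrm{W}^{\mathcal{G}}$.

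The decisive observation is that (L) follows automatically from (F) by direct application of Lemma 1.1 to the candidate triple $(\mathrm{W}^{\mathcal{G}}, \mathrm{C}^{\mathcal{G}}, \mathrm{F}^{\mathcal{G}})$: the four hypotheses of that lemma become exactly our (1), (3), (7) and (F). The entire proof therefore collapses to establishing (F), and the so far unused hypotheses (3)--(6) must be brought to bear on this task. The plan for (F) is a mapping-cylinder construction in the spirit of \cite{Bo}. For $f: X \to Y$ between cofibrant objects, the $\mathbf{M}^{\mathcal{G}}$-cylinder furnished by (5), together with (3), (4) and two out of three applied to $p \circ i_k = \mathrm{id}_X$, shows that the face inclusions $i_0, i_1: X \to CylX$ lie individually in $\mathrm{C}^{\mathcal{G}} \cap \mathrm{W}^{\mathcal{G}}$. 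A pushout of $f$ against one face inclusion yields a factorization $X \to M \to Y$ whose first leg is in $\mathrm{C}^{\mathcal{G}}$ (by (3)'s closure under pushouts) and whose complementary retraction to $Y$ is in $\mathrm{W}$ by (6), applied to the corresponding pushout of a $\mathrm{W}$-map along a $\mathrm{C}^{\mathcal{G}}$-map between cofibrant objects. Applying (8) to this retraction and invoking two out of three for $\mathrm{W}^{\mathcal{G}}$ then upgrades it to an $(\mathrm{F}^{\mathcal{G}} \cap \mathrm{W}^{\mathcal{G}})$-map while absorbing the new $\mathrm{C}^{\mathcal{G}} \cap \mathrm{W}^{\mathcal{G}}$-part into the first leg. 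The passage from arbitrary $f$ to the cofibrant case proceeds analogously, by cofibrant replacement in the original model structure and another use of (6) to transport the cofibrant factorization back along the resulting trivial fibrations.

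The hardest part will be arranging the mapping-cylinder pushout so that the first leg is genuinely in $\mathrm{C}^{\mathcal{G}}$ (not just in $\mathrm{C}$) and the complementary retraction genuinely lands in $\mathrm{W}$ (not merely in $\mathrm{W}^{\mathcal{G}}$), so that (6) is actually applicable and the subsequent use of (8) together with two out of three produces a true $\mathrm{W}^{\mathcal{G}}$-equivalence rather than an empty statement. This is the technical heart of the argument and is precisely where the present approach departs from, and claims to use fewer assumptions than, \cite[Theorem 3.3]{Bo}.
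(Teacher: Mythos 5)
Your overall skeleton is the paper's: reduce everything, via Lemma 1.1 applied with (1), (3), (7) and the desired factorization, to producing for every map a factorization into a $\mathrm{C}^{\mathcal{G}}$-map followed by an $\mathrm{F}^{\mathcal{G}}\cap\mathrm{W}^{\mathcal{G}}$-map; do this first for maps between cofibrant objects by a mapping cylinder, then in general by cofibrant replacement, where (6) is used to see that the pushout of the replacement map $u\in\mathrm{W}$ along the $\mathrm{C}^{\mathcal{G}}$-map $i_{\tilde f}$ between cofibrant objects is again in $\mathrm{W}$, followed by one more application of (8) and (1). But there is a genuine gap exactly at the step you flag as the technical heart, and your proposed way of closing it cannot work. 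For $g\colon X\to Y$ between cofibrant objects, the retraction $p_g\colon M_g\to Y$ of the mapping cylinder is in general \emph{not} in $\mathrm{W}$, only in $\mathrm{W}^{\mathcal{G}}$: take for $(\mathrm{W,C,F})$ the minimal model structure, where $\mathrm{W}$ is the class of isomorphisms, and note that $M_g\to Y$ is rarely an isomorphism. Moreover (6) simply does not apply to the square defining $M_g$: the maps being pushed out there are $g$ and a face inclusion of the cylinder, neither of which is known to be in $\mathrm{W}$; in the paper (6) enters only in the later, non-cofibrant reduction step, never in the cylinder construction.

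What is actually needed, and what the paper does, is to show that the other structure map $j_g\colon Y\to M_g$ lies in $\mathrm{C}^{\mathcal{G}}\cap\mathrm{W}^{\mathcal{G}}$ and then deduce $p_g\in\mathrm{W}^{\mathcal{G}}$ from $p_gj_g=1_Y$ and (1). The membership $j_g\in\mathrm{W}^{\mathcal{G}}$ is not a pushout statement either, since nothing guarantees that pushouts of maps in $\mathrm{C}^{\mathcal{G}}\cap\mathrm{W}^{\mathcal{G}}$ remain in $\mathrm{W}^{\mathcal{G}}$; it is obtained by a retract argument: $j_g$ is a pushout of a face inclusion of the cylinder, which lies in $\mathrm{C}^{\mathcal{G}}\cap\mathrm{W}^{\mathcal{G}}$, hence $j_g$ is in $\mathrm{C}^{\mathcal{G}}$ by (3) and has the left lifting property against $\mathrm{F}^{\mathcal{G}}$ because that lifting property, supplied by (7), is stable under pushout; factoring $j_g$ by (8) and lifting against the $\mathrm{F}^{\mathcal{G}}$-part exhibits $j_g$ as a retract of a map in $\mathrm{C}^{\mathcal{G}}\cap\mathrm{W}^{\mathcal{G}}$, so (2) concludes. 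You never invoke (7) or (2) in the factorization part, so this mechanism is absent from your plan, and without it the claimed $\mathrm{W}^{\mathcal{G}}$-property of the second leg has no proof. A smaller imprecision in the same step: the first leg $i_g\colon X\to M_g$ is not itself a pushout of a $\mathrm{C}^{\mathcal{G}}$-map, so it is not in $\mathrm{C}^{\mathcal{G}}$ merely ``by (3)'s closure under pushouts''; one writes it as $X\to X\sqcup Y\to M_g$, a pushout of $\emptyset\to Y$ (which is in $\mathrm{C}^{\mathcal{G}}$ by (4), using that $Y$ is cofibrant) followed by a pushout of $i_0\sqcup i_1$, and then applies (3) to the composite.
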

\begin{proof}
Suppose we have shown that every map $f$ of {\bf M} 
factors as $f=qi$, where $i$ is a map in ${\rm C}^{\mathcal{G}}$ 
and $q$ is a map in ${\rm F}^{\mathcal{G}} \cap 
{\rm W}^{\mathcal{G}}$. Then, since we have (7), (8) and (3),
the fact that ${\rm W}^{\mathcal{G}},{\rm C}^{\mathcal{G}}$ 
and ${\rm F}^{\mathcal{G}}$ form a model structure 
would follow from Lemma 1.1.

To establish the desired factorization, let first $g:X\rightarrow Y$ 
be a map between cofibrant objects. We shall construct the 
mapping cylinder factorization $g=p_{g}i_{g}$ of $g$, where  
$i_{g}$ is in ${\rm C}^{\mathcal{G}}$ and $p_{g}$
in ${\rm W}^{\mathcal{G}}$. Let 
$X\sqcup X\overset{i_{0}\sqcup i_{1}}
\longrightarrow CylX\overset{p}\longrightarrow X$
be an ${\bf M}^{\mathcal{G}}$-cylinder object for $X$ (5).
Consider the following diagram, in which all 
squares are pushouts
\[
\xymatrix{
\emptyset \ar[r] \ar[d] & X \ar[d]^{i_{1}}\\
X \ar[r]^{i_{0}} \ar[d]_{g} & X\sqcup X \ar[r]^{i_{0}\sqcup i_{1}} 
\ar[d]^{X\sqcup g} & CylX \ar[d]^{\pi_{g}} \ar[r]^{p} & X\\
Y \ar[r]_{\sigma_{Y}} & X\sqcup Y \ar[r]_{h} & M_{f}\\
}
   \]
Put $i_{g}=h(X\sqcup g)i_{1}$ and $j_{g}=h\sigma_{Y}$.
There is a unique map $p_{g}:M_{f}\rightarrow Y$
such that $gp=p_{g}\pi_{g}$ and $p_{g}j_{g}=1_{Y}$.
Then $g=p_{g}i_{g}$. The map $i_{g}$ is in 
${\rm C}^{\mathcal{G}}$ by (3) and (4). The map $j_{g}$
is in ${\rm C}^{\mathcal{G}}\cap {\rm W}^{\mathcal{G}}$
using (1), (2), (7) and (8). Therefore $p_{g}$ is in 
${\rm W}^{\mathcal{G}}$ by (1).

Let now $f:X\rightarrow Y$ be an arbitrary map of {\bf M}. 
We can construct a commutative diagram
\[
   \xymatrix{
\tilde{X} \ar[r]^{u} \ar[d]_{\tilde{f}} & X \ar[d]^{f}\\
\tilde{Y} \ar[r]_{v} & Y\\
}
  \]
in which $u$ and $v$ are in {\rm W} and $\tilde{X}$ 
and $\tilde{Y}$ are cofibrant. By the above, $\tilde{f}$ 
factors as $\tilde{f}=p_{\tilde{f}}i_{\tilde{f}}$. Let $D$ 
be the pushout of $i_{\tilde{f}}:\tilde{X}\rightarrow 
M_{\tilde{f}}$ along $u$. By (3) the map $X\rightarrow D$ 
is in ${\rm C}^{\mathcal{G}}$. By (6) the map 
$M_{\tilde{f}}\rightarrow D$ is in {\rm W}.
Factor (8) the canonical map $D\rightarrow Y$ into
a map $D\rightarrow E$ in ${\rm C}^{\mathcal{G}}\cap 
{\rm W}^{\mathcal{G}}$ followed by map $q:E\rightarrow Y$
in ${\rm F}^{\mathcal{G}}$. $q$ is in 
${\rm W}^{\mathcal{G}}$ by (1). Take $i$ to be 
the composite $X\rightarrow D\rightarrow E$,
then the desired factorization is $f=qi$.
\end{proof}
We denote by ${\bf M}^{\mathcal{G}}$ the model 
structure constructed in Theorem 1.2. By construction, 
the cofibrant objects of ${\bf M}^{\mathcal{G}}$
coincide with the cofibrant objects of {\bf M}.
\begin{proposition}
The model category ${\bf M}^{\mathcal{G}}$ is 
left proper.
\end{proposition}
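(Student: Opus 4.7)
The plan is to reduce left properness of ${\bf M}^{\mathcal{G}}$ to a pushout between cofibrant objects (via cofibrant replacement), and in that case to decompose the weak equivalence using the mapping cylinder construction from the proof of Theorem 1.2 and invoke hypothesis (6). Given a pushout square in ${\bf M}^{\mathcal{G}}$ with $i:A\to C$ in ${\rm C}^{\mathcal{G}}$ and $f:A\to B$ in ${\rm W}^{\mathcal{G}}$, I would take a cofibrant replacement $\tilde{A}\to A$ in ${\bf M}$ (a trivial fibration from a cofibrant object, hence in ${\rm W}$), and then factor $\tilde{A}\to A\to B$ and $\tilde{A}\to A\to C$ in ${\bf M}^{\mathcal{G}}$ using the $({\rm C}^{\mathcal{G}},{\rm F}^{\mathcal{G}}\cap{\rm W}^{\mathcal{G}})$-factorization from (8). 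This yields $\tilde{f}:\tilde{A}\to\tilde{B}$ in ${\rm W}^{\mathcal{G}}$ (by two-out-of-three) and $\tilde{i}:\tilde{A}\to\tilde{C}$ in ${\rm C}^{\mathcal{G}}$, with $\tilde{B},\tilde{C}$ cofibrant. A careful comparison using (6) on the ${\bf M}$-weak-equivalence components then shows that the induced map $\tilde{D}=\tilde{B}\sqcup_{\tilde{A}}\tilde{C}\to D$ is in ${\rm W}^{\mathcal{G}}$, so it suffices to treat the cofibrant case.

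In the cofibrant case, factor $\tilde{f}=p_{\tilde{f}}i_{\tilde{f}}$ via the mapping cylinder from the proof of Theorem 1.2, with $i_{\tilde{f}}\in{\rm C}^{\mathcal{G}}$, $p_{\tilde{f}}\in{\rm W}^{\mathcal{G}}$, and the section $j_{\tilde{f}}\in{\rm C}^{\mathcal{G}}\cap{\rm W}^{\mathcal{G}}$ of $p_{\tilde{f}}$. Two-out-of-three then forces $i_{\tilde{f}}\in{\rm C}^{\mathcal{G}}\cap{\rm W}^{\mathcal{G}}$. Split the pushout of $\tilde{f}$ along $\tilde{i}$ into the pushout of the trivial cofibration $i_{\tilde{f}}$ (preserved by the standard argument) followed by the pushout of $p_{\tilde{f}}$ along a ${\rm C}^{\mathcal{G}}$-cofibration between cofibrant objects. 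For the latter, I would expand $M_{\tilde{f}}$ via its construction as a pushout of $\mathrm{Cyl}\,\tilde{A}$ and $\tilde{A}\sqcup\tilde{B}$ over $\tilde{A}\sqcup\tilde{A}$: the pushout of $p_{\tilde{f}}$ then decomposes into pushouts involving the cylinder projection and the map $(\tilde{f},1_{\tilde{B}})$, and the cylinder endpoint inclusions $\tilde{A}\to\mathrm{Cyl}\,\tilde{A}$ are in ${\rm C}^{\mathcal{G}}\cap{\rm W}^{\mathcal{G}}$, so pushouts against them yield trivial cofibrations.

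The main obstacle is precisely this pushout of $p_{\tilde{f}}$: since $p_{\tilde{f}}$ is a trivial fibration in ${\bf M}^{\mathcal{G}}$ but need not be in ${\rm W}$, hypothesis (6) does not apply to it directly. The decomposition via the explicit combinatorial structure of $M_{\tilde{f}}$ is designed to reduce the analysis to pushouts of cylinder endpoint inclusions (which are trivial cofibrations in ${\bf M}^{\mathcal{G}}$) together with pieces that reduce to genuine ${\rm W}$-equivalences, where (6) applies.
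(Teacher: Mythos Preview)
Your reduction step has a real gap. You factor $\tilde{A}\to B$ and $\tilde{A}\to C$ using the $({\rm C}^{\mathcal{G}},{\rm F}^{\mathcal{G}}\cap{\rm W}^{\mathcal{G}})$-factorization; the resulting maps $\tilde{B}\to B$ and $\tilde{C}\to C$ lie only in ${\rm F}^{\mathcal{G}}\cap{\rm W}^{\mathcal{G}}$, not in ${\rm W}$ (recall only ${\rm F}\cap{\rm W}\subseteq{\rm F}^{\mathcal{G}}\cap{\rm W}^{\mathcal{G}}$ holds, not the reverse). The only genuine ${\rm W}$-map you have produced is $\tilde{A}\to A$. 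Hypothesis (6) speaks exclusively about pushouts of maps in ${\rm W}$, so the promised ``careful comparison using (6) on the ${\bf M}$-weak-equivalence components'' cannot be carried out: there are not enough ${\rm W}$-components to reach from $\tilde{D}$ to $D$. This is exactly the kind of circularity one must avoid---you are in effect assuming a form of left properness to prove it.

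Your treatment of the cofibrant case is also off target. The mapping cylinder factorization produces $p_{\tilde{f}}\in{\rm W}^{\mathcal{G}}$, and your proposed combinatorial unpacking of $M_{\tilde{f}}$ does not resolve the obstacle you yourself flag: the section $j_{\tilde{f}}$ does not push forward to a section after pushing out along $M_{\tilde{f}}\to M'$ (because $j_{\tilde{f}}\tilde{f}\neq i_{\tilde{f}}$ in general), and the pieces coming from $(\tilde{f},1_{\tilde{B}})$ still involve $\tilde{f}$, which is only in ${\rm W}^{\mathcal{G}}$. Nothing here lands in ${\rm W}$, so (6) never gets traction.

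The paper's argument avoids both problems by working with the \emph{original} model structure at the key step. After reducing (as in Bousfield) to the case where $A$ and the target of the cofibration are cofibrant, it factors $f$ in ${\bf M}$ as a ${\rm C}$-map $A\to\tilde{X}$ followed by a map $\tilde{X}\to X$ in ${\rm F}\cap{\rm W}$, and takes consecutive pushouts. The first pushout stays entirely among cofibrant ($=\mathcal{G}$-cofibrant) objects, so the standard fact that every model category is left proper on cofibrant objects---applied in the already established ${\bf M}^{\mathcal{G}}$---puts $B\to\tilde{Y}$ in ${\rm W}^{\mathcal{G}}$. The second pushout is of a genuine ${\rm W}$-map along a map in ${\rm C}^{\mathcal{G}}$ between cofibrant objects, which is precisely what (6) handles. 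The point you are missing is to factor in $({\rm C},{\rm F}\cap{\rm W})$ rather than in $({\rm C}^{\mathcal{G}},{\rm F}^{\mathcal{G}}\cap{\rm W}^{\mathcal{G}})$ or via the mapping cylinder, so that one piece is honestly in ${\rm W}$.
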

\begin{proof}
The proof proceeds exactly as in 
\cite[Proposition 3.27]{Bo}. For completeness 
we repeat it. Let 
\[
   \xymatrix{
A \ar[r]^{f} \ar[d]_{i} & X \ar[d]\\
B \ar[r] & Y\\
}
  \]
be a pushout diagram in {\bf M} in which $i$
is in ${\rm C}^{\mathcal{G}}$ and $f$ in
${\rm W}^{\mathcal{G}}$. As in \emph{loc. cit.}
we may assume that $A$ and $B$ are cofibrant.
Factor $f$ as a map $A\rightarrow \tilde{X}$ in {\rm C}
followed by a map $\tilde{X}\rightarrow Y$ in 
${\rm F}\cap{\rm W}$ and then take consecutive 
pushouts. The first map that factors $B\rightarrow Y$
is in ${\rm W}^{\mathcal{G}}$ as in \emph{loc. cit.},
the second map is in ${\rm W}^{\mathcal{G}}$ by 
assumption (6).
\end{proof}
To make the connection between Theorem 1.2 
and \cite[Theorems 3.3 or 12.4]{Bo}, let
$\mathcal{C}$ be a model category as 
in \emph{loc. cit.}. We take ${\bf M}=c\mathcal{C}$ 
with the Reedy model structure, ${\rm W}^{\mathcal{G}}$
to be the class of $\mathcal{G}$-equivalences,
${\rm C}^{\mathcal{G}}$ the class of 
$\mathcal{G}$-cofibrations and 
${\rm F}^{\mathcal{G}}$ the class of 
$\mathcal{G}$-fibrations.

Theorem 1.2 certainly admits variations.
We note one of them.
\begin{proposition}
Let $({\rm W,C,F})$ be a model structure on 
a finitely bicomplete category {\bf M}.
Let {\rm W'} and ${\rm W}^{\mathcal{G}}$
be two classes of maps of {\bf M} such that 
${\rm W}\subset {\rm W'} \subseteq 
{\rm W}^{\mathcal{G}}$. We define 
${\rm F}^{\mathcal{G}}$ to be the 
class of maps of {\bf M} having the right
lifting property with respect to the maps
in ${\rm C}\cap {\rm W'}$, and 
${\rm C}^{\mathcal{G}}$ to be the 
class of maps of {\bf M} having the left
lifting property with respect to the maps
in ${\rm F}^{\mathcal{G}}\cap 
{\rm W}^{\mathcal{G}}$.
We make the following assumptions.

$(1)$ {\rm W'} and ${\rm W}^{\mathcal{G}}$ 
have the two out of three property.

$(2)$ {\rm W'} and ${\rm W}^{\mathcal{G}}$ 
are closed under retracts.

$(3)$ Every map $f$ of {\bf M} factors as $f=pj$, 
where $j$ is a map in ${\rm C} \cap {\rm W'}$ 
and $p$ is a map in ${\rm F}^{\mathcal{G}}$.

$(4)$ Every map $f$ of {\bf M} factors as $f=qi$, 
where $i$ is a map in ${\rm C}^{\mathcal{G}}$
and $q$ is a map in ${\rm F}^{\mathcal{G}}
\cap {\rm W}^{\mathcal{G}}$.

Then the classes ${\rm W}^{\mathcal{G}},
{\rm C}^{\mathcal{G}}$ and  
${\rm F}^{\mathcal{G}}$ form a 
model structure on {\bf M}.
\end{proposition}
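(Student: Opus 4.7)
The plan is to verify the Quillen/Hovey axioms for $({\rm W}^{\mathcal{G}}, {\rm C}^{\mathcal{G}}, {\rm F}^{\mathcal{G}})$ directly, with only one substantive step (a classical retract trick) beyond formalities. The decisive preliminary observation is the inclusion ${\rm C}\cap {\rm W'} \subseteq {\rm C}^{\mathcal{G}} \cap {\rm W}^{\mathcal{G}}$. For the cofibration half: by definition of ${\rm F}^{\mathcal{G}}$, every map in ${\rm C}\cap {\rm W'}$ has the left lifting property against ${\rm F}^{\mathcal{G}}$, hence \emph{a fortiori} against ${\rm F}^{\mathcal{G}}\cap {\rm W}^{\mathcal{G}}$, so it lies in ${\rm C}^{\mathcal{G}}$. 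For the weak-equivalence half: ${\rm W'} \subseteq {\rm W}^{\mathcal{G}}$ by hypothesis.

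With this in hand, the two factorization axioms follow immediately. Assumption (4) \emph{is} the factorization into a map in ${\rm C}^{\mathcal{G}}$ followed by a map in ${\rm F}^{\mathcal{G}}\cap {\rm W}^{\mathcal{G}}$. For the other factorization, take any $f$ and apply (3) to write $f=pj$ with $j\in {\rm C}\cap {\rm W'}$ and $p\in {\rm F}^{\mathcal{G}}$; the preliminary places $j$ in ${\rm C}^{\mathcal{G}}\cap {\rm W}^{\mathcal{G}}$.

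Next, the lifting axioms. The relation ${\rm C}^{\mathcal{G}} \perp {\rm F}^{\mathcal{G}}\cap {\rm W}^{\mathcal{G}}$ holds by the very definition of ${\rm C}^{\mathcal{G}}$. The dual axiom ${\rm C}^{\mathcal{G}}\cap {\rm W}^{\mathcal{G}} \perp {\rm F}^{\mathcal{G}}$ is the main step, and I would establish it by the standard retract argument. Given $f:A\to B$ in ${\rm C}^{\mathcal{G}}\cap {\rm W}^{\mathcal{G}}$, factor (by (3)) as $f=pj$ with $j\in {\rm C}\cap {\rm W'}$ and $p\in {\rm F}^{\mathcal{G}}$. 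Since $f\in {\rm W}^{\mathcal{G}}$ and $j\in {\rm W'}\subseteq {\rm W}^{\mathcal{G}}$, two-out-of-three (1) forces $p\in {\rm W}^{\mathcal{G}}$, so $p\in {\rm F}^{\mathcal{G}}\cap {\rm W}^{\mathcal{G}}$. A lift in the square with top $j$, left $f$, right $p$, bottom $1_{B}$ is available because $f\in {\rm C}^{\mathcal{G}}$; such a lift $r:B\to E$ satisfies $rf=j$ and $pr=1_{B}$, exhibiting $f$ as a retract of $j$. Since $j$ has the left lifting property against every map in ${\rm F}^{\mathcal{G}}$ by the defining property of the latter, so does its retract $f$.

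The remaining axioms are formal: ${\rm W}^{\mathcal{G}}$ satisfies two-out-of-three and is closed under retracts by (1) and (2), while ${\rm C}^{\mathcal{G}}$ and ${\rm F}^{\mathcal{G}}$ are closed under retracts automatically, being defined as one-sided lifting classes. The only non-routine piece is the retract argument above, which is a well-worn technique, so I do not anticipate any serious obstacle beyond careful bookkeeping of the inclusions ${\rm W}\subset {\rm W'}\subseteq {\rm W}^{\mathcal{G}}$ and ${\rm C}\cap {\rm W'}\subseteq {\rm C}^{\mathcal{G}}\cap {\rm W}^{\mathcal{G}}$.
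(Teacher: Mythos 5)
Your proof is correct, and it is worth noting that the paper itself gives no argument for this proposition -- it is merely announced as a ``variation'' of Theorem 1.2 -- so there is nothing to compare line by line. Your direct verification is the natural route: the key inclusion ${\rm C}\cap{\rm W'}\subseteq{\rm C}^{\mathcal{G}}\cap{\rm W}^{\mathcal{G}}$, the two factorizations read off from (3) and (4), one lifting axiom true by the definition of ${\rm C}^{\mathcal{G}}$, and the other obtained by the standard retract argument (factor $f\in{\rm C}^{\mathcal{G}}\cap{\rm W}^{\mathcal{G}}$ as $pj$ via (3), promote $p$ to ${\rm F}^{\mathcal{G}}\cap{\rm W}^{\mathcal{G}}$ by two-out-of-three, lift against it to exhibit $f$ as a retract of $j$, and use that $j$ has the left lifting property against all of ${\rm F}^{\mathcal{G}}$). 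This is both more elementary and more self-contained than the cylinder-object machinery of Lemma 1.1 and Theorem 1.2, which is unavailable here anyway since no analogue of hypotheses (4)--(6) of Theorem 1.2 is assumed. A small observation your bookkeeping makes visible: the hypotheses that ${\rm W'}$ satisfies two-out-of-three and is closed under retracts, and even the strict inclusion ${\rm W}\subset{\rm W'}$, are never used for the stated conclusion; only the properties of ${\rm W}^{\mathcal{G}}$ and the containment ${\rm W'}\subseteq{\rm W}^{\mathcal{G}}$ enter (the conditions on ${\rm W'}$ matter only if one also wants to identify ${\rm F}^{\mathcal{G}}\subseteq{\rm F}$ and ${\rm C}^{\mathcal{G}}\subseteq{\rm C}$, i.e.\ that the result is an $\ell\ell$-extension, which is not part of the conclusion as stated).
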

Theorem 1.2 admits a dual formulation.
For future reference we state it below.
The terminal object of a category, when it 
exists, is denoted by $\ast$.
\begin{theorem} 
Let $({\rm W,C,F})$ be a model structure on a 
finitely bicomplete category {\bf M}.
Let ${\rm W}^{\mathcal{G}},{\rm C}^{\mathcal{G}}$ 
and ${\rm F}^{\mathcal{G}}$ be three classes of maps of 
{\bf M} such that ${\rm W}\subset {\rm W}^{\mathcal{G}}$, 
${\rm C}^{\mathcal{G}}\subseteq {\rm C}$ and 
${\rm F}^{\mathcal{G}}\subseteq {\rm F}$.
We make the following assumptions.

$(1)$ ${\rm W}^{\mathcal{G}}$ has the two 
out of three property.

$(2)$ ${\rm W}^{\mathcal{G}}$, ${\rm C}^{\mathcal{G}}$ 
and ${\rm F}^{\mathcal{G}}$ are closed under retracts.

$(3)$ ${\rm F}^{\mathcal{G}}$ is closed under 
compositions and pullbacks.

$(4)$ For every object $X$ of {\bf M}, the map 
$X\rightarrow \ast$ is in ${\rm F}^{\mathcal{G}}$
if and only $X$ is fibrant.

$(5)$ Every fibrant object of {\bf M} has an
${\bf M}^{\mathcal{G}}$-path object. This
means that for every fibrant object $X$ of {\bf M}
there is a factorization 
$$X\overset{s}\rightarrow PathX 
\overset{p_{0}\times p_{1}}\longrightarrow X\times X$$
of the diagonal map $X\rightarrow X\times X$
such that $s$ is in ${\rm W}^{\mathcal{G}}$
and $p_{0}\times p_{1}$ is in ${\rm F}^{\mathcal{G}}$.

$(6)$ {\rm W} is closed under pullbacks along maps 
from ${\rm F}^{\mathcal{G}}$ between fibrant objects.

$(7)$ For every commutative solid arrow diagram in {\bf M}
\[
   \xymatrix{
A \ar[r] \ar[d]_{i} & X \ar[d]^{q}\\
B \ar[r] \ar@{-->} [ur] & Y\\
}
  \]
where $i$ is in ${\rm C}^{\mathcal{G}}$ and 
$q$ is in ${\rm F}^{\mathcal{G}}\cap 
{\rm W}^{\mathcal{G}}$, there is a dotted arrow 
making everything commute.

$(8)$ Every map $f$ of {\bf M} factors as $f=qi$, 
where $i$ is a map in ${\rm C}^{\mathcal{G}}$
and $q$ is a map in ${\rm F}^{\mathcal{G}}
\cap {\rm W}^{\mathcal{G}}$.

Then the classes ${\rm W}^{\mathcal{G}},
{\rm C}^{\mathcal{G}}$ and  
${\rm F}^{\mathcal{G}}$ form a right 
proper model structure on {\bf M}.
\end{theorem}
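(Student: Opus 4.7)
The plan is to dualize throughout: carry out the formal dual of the proof of Theorem 1.2 to obtain the model structure, then dualize Proposition 1.3 for right properness.

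First I would record the dual of Lemma 1.1: assuming (1), (3), (7) and (8) of the present theorem, every map in ${\rm C}^{\mathcal{G}}\cap {\rm W}^{\mathcal{G}}$ has the left lifting property with respect to every map in ${\rm F}^{\mathcal{G}}$. The proof is the formal pullback-dualization of Lemma 1.1, with the pushouts of that argument replaced by pullbacks.

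The main step is to establish the missing factorization: every map of ${\bf M}$ factors as a map in ${\rm C}^{\mathcal{G}}\cap {\rm W}^{\mathcal{G}}$ followed by a map in ${\rm F}^{\mathcal{G}}$. For $g:X\rightarrow Y$ between fibrant objects I would construct a mapping \emph{path} factorization $g=p_{g}i_{g}$ with $i_{g}\in {\rm W}^{\mathcal{G}}$ and $p_{g}\in {\rm F}^{\mathcal{G}}$, dual to the mapping cylinder factorization used in the proof of Theorem 1.2: from an ${\bf M}^{\mathcal{G}}$-path object $Y\overset{s}\rightarrow PathY \overset{p_{0}\times p_{1}}\longrightarrow Y\times Y$ provided by (5), form the appropriate iterated pullback along $g\times 1_{Y}:X\times Y\rightarrow Y\times Y$ to define $M_{g}$, and extract the factorization $X\rightarrow M_{g}\rightarrow Y$; assumptions (1), (2), (3), (7) and (8) are then applied exactly dually to the proof of Theorem 1.2. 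For an arbitrary $f:X\rightarrow Y$ I would find a commutative square with $u,v\in {\rm W}$ into fibrant objects $\tilde{X},\tilde{Y}$, apply the mapping path factorization to $\tilde{f}=p_{\tilde{f}}i_{\tilde{f}}$, pull $p_{\tilde{f}}$ back along $v$ to obtain $D\rightarrow Y$ in ${\rm F}^{\mathcal{G}}$ by (3), and observe that assumption (6) makes the other projection $D\rightarrow M_{\tilde{f}}$ a map in ${\rm W}$; then the canonical map $X\rightarrow D$ is in ${\rm W}^{\mathcal{G}}$ by (1). A final application of (8) to $X\rightarrow D$ and a two-out-of-three argument using (3) produce the desired factorization $X\rightarrow E\rightarrow Y$. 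Combined with the dual of Lemma 1.1, this completes the verification of the model category axioms.

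Right properness is then obtained by dualizing Proposition 1.3: given a pullback square with $p\in {\rm F}^{\mathcal{G}}$ and $f\in {\rm W}^{\mathcal{G}}$, reduce to the case where the source and target of $p$ are fibrant (as in \emph{loc.\ cit.} dualized), factor $f$ in $({\rm W,C,F})$ as a map in ${\rm C}\cap {\rm W}$ followed by a map in ${\rm F}$, take consecutive pullbacks, and invoke (6) at the ${\rm F}$-stage. I expect the principal obstacle to lie in the mapping path construction: one must set up the pullback diagram so that the factor dual to the $j_{g}$ of Theorem 1.2's proof really lies in ${\rm F}^{\mathcal{G}}\cap {\rm W}^{\mathcal{G}}$, which is precisely where (7) and (8) must be combined and where the non-self-dual roles of path versus cylinder objects make the bookkeeping nontrivial.
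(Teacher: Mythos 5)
Your route is the paper's own: Theorem 1.5 is obtained by formally dualizing Lemma 1.1, the proof of Theorem 1.2, and Proposition 1.3, and your mapping-path-object factorization for maps between fibrant objects, followed by the fibrant-replacement square together with (3), (6), (1) and (8) for an arbitrary map, is exactly the dual of the paper's argument. One assertion is wrong as written, however: the dual of Lemma 1.1 does not hold assuming only (1), (3), (7) and (8) of the theorem. Dualizing Lemma 1.1 turns its hypothesis $(4)$ into ``every map factors as a map in ${\rm C}^{\mathcal{G}}\cap{\rm W}^{\mathcal{G}}$ followed by a map in ${\rm F}^{\mathcal{G}}$'', which is precisely the factorization your main step is designed to construct, and not assumption (8), which supplies the other factorization $({\rm C}^{\mathcal{G}},{\rm F}^{\mathcal{G}}\cap{\rm W}^{\mathcal{G}})$. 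In the dualized proof one must factor the relevant map through the $({\rm C}^{\mathcal{G}}\cap{\rm W}^{\mathcal{G}},{\rm F}^{\mathcal{G}})$ factorization, and (8) cannot be substituted for it; from (1), (3), (7), (8) alone the lifting of ${\rm C}^{\mathcal{G}}\cap{\rm W}^{\mathcal{G}}$ against ${\rm F}^{\mathcal{G}}$ is not obtainable by this argument. The repair is only a reordering: establish the mapping-path factorization first and then invoke the dual lemma with that factorization as input, which is exactly how the paper phrases the proof of Theorem 1.2 (``Suppose we have shown that every map factors as $f=qi$\dots''). With that correction your verification of the model structure goes through.

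On right properness, be precise about where (6) enters. After reducing to the case where $p\in{\rm F}^{\mathcal{G}}$ has fibrant domain and codomain and factoring the ${\rm W}^{\mathcal{G}}$-map $f$ as $w\in{\rm C}\cap{\rm W}$ followed by $g\in{\rm F}$, assumption (6) handles the pullback of $w$ (a map in ${\rm W}$) along the base change of $p$, which is a map in ${\rm F}^{\mathcal{G}}$ between fibrant objects; the pullback of $g$ along $p$ --- where $g$ lies in ${\rm F}\cap{\rm W}^{\mathcal{G}}$ but in general not in ${\rm W}$ --- is the stage that requires the dualized Bousfield-style argument (the dual of the step Proposition 1.3 defers to \cite{Bo}), not assumption (6). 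Your phrase ``invoke (6) at the F-stage'' suggests the opposite assignment, so make sure the nontrivial stage is covered by the dualized \emph{loc.\ cit.} argument rather than by (6); otherwise the plan coincides with the paper's.
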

Motivated by the previous considerations and 
other naturally occuring examples we make the 
following
\begin{definition}
Let $({\rm W,C,F})$ be a model structure 
on a category {\bf M}. An $\ell\ell$-{\bf extension} (or,
{\bf extension of type} $\ell\ell$) of $({\rm W,C,F})$ 
is a model structure 
$({\rm W}^{\mathcal{G}},{\rm C}^{\mathcal{G}},
{\rm F}^{\mathcal{G}})$ on {\bf M} such that 
${\rm W}\subset {\rm W}^{\mathcal{G}}$, 
${\rm C}^{\mathcal{G}}\subseteq {\rm C}$ and 
${\rm F}^{\mathcal{G}}\subseteq {\rm F}$.
\end{definition}
Every model structure on a category 
is an $\ell\ell$-extension of its minimal 
model structure ({\rm W}=isomorphisms, 
{\rm C}=all maps, {\rm F}=all maps). Thus, 
Theorem 1.2 gives, in particular, a way to 
construct model  categories with all objects 
cofibrant.

An $\ell\ell$-extension as in Definition 1 for which 
${\rm C}^{\mathcal{G}}={\rm C}$ is sometimes
called left Bousfield localization, and one for which 
${\rm F}^{\mathcal{G}}={\rm F}$  is sometimes
called right Bousfield localization. Left and right
Bousfield localizations are ubiquitous \cite{Hi}.
\\

There are other kinds of extensions. For example,
given a category and a model structure on it, 
an $\ell m$-{\bf extension} of the given model structure
is another model structure on the same category
having more weak equivalences, \emph{l}ess
cofibrations and \emph{m}ore fibrations.
The following existence result of $\ell m$-extensions 
can be proved in a similar way as (the dual of) 
Theorem 1.2.
\begin{theorem}
Let $({\rm W,C,F})$ be a model structure on a 
finitely bicomplete category {\bf M}.
Let ${\rm W}^{\mathcal{G}},{\rm C}^{\mathcal{G}}$ 
and ${\rm F}^{\mathcal{G}}$ be three classes of maps of 
{\bf M} such that ${\rm W}\subset {\rm W}^{\mathcal{G}}$, 
${\rm C}^{\mathcal{G}}\subset {\rm C}$ and 
${\rm F} \subset{\rm F}^{\mathcal{G}}$.
We make the following assumptions.

$(1)$ ${\rm W}^{\mathcal{G}}$ has the two out of three property.

$(2)$ ${\rm W}^{\mathcal{G}}$, ${\rm C}^{\mathcal{G}}$ 
and ${\rm F}^{\mathcal{G}}$ are closed under retracts.

$(3)$ ${\rm F}^{\mathcal{G}}$ is closed under compositions 
and pullbacks.

$(4)$ For every commutative solid arrow diagram in {\bf M}
\[
   \xymatrix{
A \ar[r] \ar[d]_{i} & X \ar[d]^{q}\\
B \ar[r] \ar@{-->} [ur] & Y\\
}
  \]
where $i$ is in ${\rm C}^{\mathcal{G}}$ and 
$q$ is in ${\rm F}^{\mathcal{G}}\cap {\rm W}^{\mathcal{G}}$, 
there is a dotted arrow making everything commute.

$(5)$ Every map $f$ of {\bf M} factors as $f=qi$, 
where $i$ is a map in ${\rm C}^{\mathcal{G}}$ 
and $q$ is a map in ${\rm F}^{\mathcal{G}}\cap 
{\rm W}^{\mathcal{G}}$.

$(6)$ The model structure on {\bf M} is right proper.

Then the classes ${\rm W}^{\mathcal{G}},
{\rm C}^{\mathcal{G}}$ and  
${\rm F}^{\mathcal{G}}$ form a 
model structure on {\bf M}.
\end{theorem}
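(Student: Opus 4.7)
The plan is to follow the same high-level structure as the proof of (the dual of) Theorem 1.2: establish by hand the missing factorization of every map as a map in ${\rm C}^{\mathcal{G}}\cap{\rm W}^{\mathcal{G}}$ followed by a map in ${\rm F}^{\mathcal{G}}$, and then derive the remaining lifting property (of ${\rm C}^{\mathcal{G}}\cap{\rm W}^{\mathcal{G}}$ against ${\rm F}^{\mathcal{G}}$) by a pullback-based dualization of Lemma 1.1. The other model-category axioms are furnished directly by (1), (2), (4), and (5).

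To produce the factorization, let $f:X\to Y$ be an arbitrary map. I would first apply the factorization axiom of $({\rm W},{\rm C},{\rm F})$ to write $f=pj$ with $j\in{\rm C}\cap{\rm W}$ and $p\in{\rm F}$. Since ${\rm F}\subset{\rm F}^{\mathcal{G}}$, the map $p$ already lies in ${\rm F}^{\mathcal{G}}$. To upgrade $j$ to a $\mathcal{G}$-trivial cofibration, apply (5) to $j$ itself, writing $j=qi$ with $i\in{\rm C}^{\mathcal{G}}$ and $q\in{\rm F}^{\mathcal{G}}\cap{\rm W}^{\mathcal{G}}$. The inclusion ${\rm W}\subset{\rm W}^{\mathcal{G}}$ gives $j\in{\rm W}^{\mathcal{G}}$, so two-out-of-three (1) applied to $j=qi$ with $q\in{\rm W}^{\mathcal{G}}$ forces $i\in{\rm W}^{\mathcal{G}}$; hence $i\in{\rm C}^{\mathcal{G}}\cap{\rm W}^{\mathcal{G}}$. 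Writing $f=(pq)\,i$ and invoking closure under composition in (3) yields $pq\in{\rm F}^{\mathcal{G}}$, giving the required factorization.

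For the remaining lifting, I would dualize the proof of Lemma 1.1. Given a square with $j\in{\rm C}^{\mathcal{G}}\cap{\rm W}^{\mathcal{G}}$ on the left and $p\in{\rm F}^{\mathcal{G}}$ on the right, factor the bottom horizontal map by the factorization just proved, pull $p$ back along the resulting ${\rm F}^{\mathcal{G}}$-factor (the pullback staying in ${\rm F}^{\mathcal{G}}$ by (3)), and then factor the canonically induced map from the top-left corner into this pullback once more in the same way. Pullback closure in (3) together with two-out-of-three for ${\rm W}^{\mathcal{G}}$ identifies the map one ends up with on the right as an ${\rm F}^{\mathcal{G}}\cap{\rm W}^{\mathcal{G}}$-map, against which assumption (4) provides a lift; this lift transports back to a lift of the original square in the same manner as in the proof of Lemma 1.1.

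The main, essentially formal, obstacle is the pullback-based dualization of Lemma 1.1, which substitutes pullbacks for pushouts and swaps the roles of cofibrations and fibrations throughout. Right properness of $({\rm W},{\rm C},{\rm F})$ from (6) plays the role that assumption (6) of Theorem 1.2 plays there: it ensures that the pullback step in the dualized Lemma interacts correctly with the weak equivalences of the original model structure when linking the $\mathcal{G}$-factorization to the factorization given by $({\rm W},{\rm C},{\rm F})$.
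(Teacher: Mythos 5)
Your proof is correct, and it follows the overall strategy the paper itself indicates for this theorem (the paper gives no written-out proof, saying only that it ``can be proved in a similar way as (the dual of) Theorem 1.2''): namely, establish the missing factorization and then feed it into the dual of Lemma 1.1, whose hypotheses in this setting are exactly (1), (3), (4) together with that factorization; the remaining axioms are (1), (2), (4), (5) and finite bicompleteness. The genuine difference lies in how you produce the factorization of an arbitrary map as a map in ${\rm C}^{\mathcal{G}}\cap{\rm W}^{\mathcal{G}}$ followed by a map in ${\rm F}^{\mathcal{G}}$: a literal dualization of the proof of Theorem 1.2 would build it from path objects and fibrant approximations (not available here, since this theorem has no path-object hypothesis, and presumably the place where the author intends right properness to enter, as the analogue of hypothesis (6) of Theorem 1.2), whereas you exploit ${\rm F}\subset{\rm F}^{\mathcal{G}}$ to get it in one line: factor $f=pj$ with $j\in{\rm C}\cap{\rm W}$ and $p\in{\rm F}$, refine $j=qi$ by (5), deduce $i\in{\rm W}^{\mathcal{G}}$ by two-out-of-three since $j\in{\rm W}\subset{\rm W}^{\mathcal{G}}$, and note $pq\in{\rm F}^{\mathcal{G}}$ by (3). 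This is simpler and perfectly valid, and your sketch of the dualized Lemma 1.1 (factor the bottom map, pull back along the ${\rm F}^{\mathcal{G}}$-factor, factor the induced map to the pullback, identify the resulting right-hand map as lying in ${\rm F}^{\mathcal{G}}\cap{\rm W}^{\mathcal{G}}$ by (3) and two-out-of-three, then lift via (4)) is the correct dual of the paper's argument. One remark: your closing paragraph asserts that right properness (6) is what makes the pullback step interact correctly with ${\rm W}$; in fact your argument never uses (6) at all---neither the factorization step nor the dualized Lemma involves pulling back a weak equivalence of {\bf M} along a fibration of {\bf M}---so that sentence misdescribes your own proof. This is not a gap (an unused hypothesis does no harm, and your argument in fact shows the stated conclusion already holds without (6)), but you should either delete that claim or state explicitly that (6) is not needed on your route.
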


\section{On right derived functors and 
completions, and other formal results}
There seem to be few results about $\ell\ell$-extensions. 
In the first part of this section we make 
an attempt to view results like Theorem 6.2 and 
the first part of Theorem 6.5 from \cite{Bo}--or 
rather their generalizations, as explained in 
\cite[6.20 and 12.8]{Bo}, as results about (very 
specific) $\ell\ell$-extensions. We hope that our 
approach highlights both general and particular
aspects of this part of Bousfield's work. Inspired 
by \cite[Chapters 3,4,5 and 9]{Hi}, we study in the 
second part of this section the behaviour of some 
model category theoretical properties under the passage 
to an $\ell\ell$-extension.

Throughout, {\bf M} is a bicomplete category and 
$({\rm W}^{\mathcal{G}},{\rm C}^{\mathcal{G}},
{\rm F}^{\mathcal{G}})$ an $\ell\ell$-extension of a 
model structure $({\rm W,C,F})$ on {\bf M}. We denote by
${\bf M}^{\mathcal{G}}$ the $\ell\ell$-extension.
The fibrant (cofibrant) objects of ${\bf M}^{\mathcal{G}}$ 
will be referred to as $\mathcal{G}$-{\bf fibrant} 
($\mathcal{G}$-{\bf cofibrant}). The fibrant (cofibrant) 
objects with respect to the model structure 
$({\rm W,C,F})$ will be simply referred to as fibrant
(cofibrant).
\begin{proposition}
Suppose that the $\mathcal{G}$-cofibrant objects 
coincide with the cofibrant objects of {\bf M} and that
${\bf M}^{\mathcal{G}}$ is a simplicial model category. 
Let $\mathcal{N}$ be a simplicial category
and $T:{\bf M}\rightarrow \mathcal{N}$ a simplicial functor
with the property that $T$ sends the maps in {\rm W} 
between fibrant objects to isomorphisms. Let 
$H^{s}:\mathcal{N}\rightarrow \mathcal{N}'$ be a functor
which identifies strictly simplicially homotopic maps.
Then the composite $H^{s}T$ sends the maps in 
${\rm W}^{\mathcal{G}}$ between $\mathcal{G}$-fibrant 
objects to isomorphisms. 
\end{proposition}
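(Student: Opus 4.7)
The strategy is to invert $H^s T(f)$ by reducing $f$ to maps from two classes that $H^s T$ inverts automatically: (a) ${\rm W}$-equivalences between ${\bf M}$-fibrant objects (by the hypothesis on $T$), and (b) simplicial homotopy equivalences in ${\bf M}$ (because $T$ is simplicial and $H^s$ identifies simp-homotopic maps). The key general fact feeding into (b) is that in the simplicial model category ${\bf M}^{\mathcal{G}}$, every ${\rm W}^{\mathcal{G}}$-equivalence between $\mathcal{G}$-bifibrant objects is a simplicial homotopy equivalence; combined with the two-out-of-three property of the class of maps inverted by a functor, this is the only engine.

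First I would form $\mathcal{G}$-cofibrant replacements $p_X\colon\tilde X\to X$ and $p_Y\colon\tilde Y\to Y$ in ${\bf M}^{\mathcal{G}}$ by factoring $\emptyset\to X$ and $\emptyset\to Y$. Since $X,Y$ are $\mathcal{G}$-fibrant and $\mathcal{G}$-fibrations compose, $\tilde X,\tilde Y$ are $\mathcal{G}$-bifibrant, and by the assumptions $\mathcal{G}$-cof${}={}$cof and ${\rm F}^{\mathcal{G}}\subseteq{\rm F}$ they are also cofibrant and fibrant in ${\bf M}$. Lifting $f$ along $p_Y$ (using the $\mathcal{G}$-cofibrancy of $\tilde X$ and that $p_Y$ is a $\mathcal{G}$-trivial fibration) gives $\tilde f\colon\tilde X\to\tilde Y$, a ${\rm W}^{\mathcal{G}}$-equivalence between $\mathcal{G}$-bifibrants by two-out-of-three, so $H^s T(\tilde f)$ is an isomorphism. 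The commuting square $p_Y\tilde f=fp_X$ then reduces the proof to showing $H^s T(p_X)$ and $H^s T(p_Y)$ are isomorphisms.

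The main obstacle is precisely this last step: $p_X,p_Y$ are $\mathcal{G}$-trivial fibrations that are not known to lie in ${\rm W}$, and $X,Y$ are not known to be $\mathcal{G}$-cofibrant, so neither (a) nor a direct section is available. My plan is to interpose ${\bf M}$-cofibrant replacements $q_X\colon X^c\to X$ in ${\bf M}$ (a trivial fibration with $X^c$ cofibrant and fibrant in ${\bf M}$, hence inverted by $T$), lift $q_X$ along $p_X$ to a ${\rm W}^{\mathcal{G}}$-equivalence $\beta\colon X^c\to\tilde X$, then further $\mathcal{G}$-fibrantly replace $X^c$ into a $\mathcal{G}$-bifibrant $\widehat{X^c}$ and extend $\beta$ to $\hat\beta\colon\widehat{X^c}\to\tilde X$. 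Since $\hat\beta$ is a ${\rm W}^{\mathcal{G}}$-equivalence between $\mathcal{G}$-bifibrants, $H^s T(\hat\beta)$ is an isomorphism; propagating invertibility back through the zig-zag is then a two-out-of-three exercise provided one inverts the $\mathcal{G}$-trivial cofibration $X^c\to\widehat{X^c}$. This last inversion is the technical heart, requiring delicate use of SM7 in ${\bf M}^{\mathcal{G}}$ together with the interaction of the two model structures; I would model my handling on the corresponding argument in \cite{Bo}. The same reasoning applies to $p_Y$ and concludes that $H^s T(f)$ is an isomorphism.
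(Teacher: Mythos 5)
Your first reduction is fine and matches the spirit of the paper: replace $f$ by a map $\tilde f$ between objects that are simultaneously ($\mathcal{G}$-)cofibrant and $\mathcal{G}$-fibrant, so that $T(\tilde f)$ is a strict simplicial homotopy equivalence and $H^sT(\tilde f)$ is invertible. But your proof does not close, and you say so yourself: after interposing $X^c$ and $\widehat{X^c}$ you still must invert $H^sT$ on the $\mathcal{G}$-trivial cofibration $j\colon X^c\to\widehat{X^c}$ (equivalently on $\beta$, equivalently on $p_X$ directly), and at that point neither hypothesis applies. The map $j$ lies in ${\rm W}^{\mathcal{G}}$ but need not lie in ${\rm W}$, so the assumption on $T$ gives nothing; and its domain $X^c$ is fibrant in ${\bf M}$ but not $\mathcal{G}$-fibrant, so the ``weak equivalence between cofibrant-fibrant objects is a simplicial homotopy equivalence'' engine is unavailable (SM7 in ${\bf M}^{\mathcal{G}}$ only yields maps \emph{into} $\mathcal{G}$-fibrant targets and cannot produce a homotopy inverse landing in $X^c$). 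Deferring this to ``delicate use of SM7, modelled on Bousfield'' is not an argument; in Bousfield, and in Lemma 2.2 of this paper, the analogous inversions require extra hypotheses (e.g.\ on the map $\alpha$ built from $\mathrm{Tot}$ of resolutions) that you do not have here.

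The difficulty is self-inflicted: you performed the cofibrant approximation in ${\bf M}^{\mathcal{G}}$, whose trivial fibrations escape the class ${\rm W}$ controlled by the hypothesis on $T$. The paper instead takes the approximating square with legs $u,v$ trivial fibrations of ${\bf M}$ (factor in ${\bf M}$, not in ${\bf M}^{\mathcal{G}}$). Since ${\rm C}^{\mathcal{G}}\subseteq{\rm C}$, one has ${\rm F}\cap{\rm W}\subseteq{\rm F}^{\mathcal{G}}\cap{\rm W}^{\mathcal{G}}$, so $\tilde X,\tilde Y$ are still $\mathcal{G}$-fibrant as well as cofibrant ($=\mathcal{G}$-cofibrant), and $\tilde f\in{\rm W}^{\mathcal{G}}$ is inverted by $H^sT$ exactly as in your first step; but now $u,v$ are maps in ${\rm W}$ between fibrant objects of ${\bf M}$, hence $T(u),T(v)$ are isomorphisms by hypothesis, and the square $fu=v\tilde f$ finishes the proof. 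With that one change of where you factor, your argument becomes the paper's; as written, the final inversion is a genuine gap.
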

\begin{proof}
Let $f:X\rightarrow Y$ be a map in ${\rm W}^{\mathcal{G}}$ 
between $\mathcal{G}$-fibrant objects. We can construct a 
commutative diagram
\[
   \xymatrix{
\tilde{X} \ar[r]^{u} \ar[d]_{\tilde{f}} & X \ar[d]^{f}\\
\tilde{Y} \ar[r]_{v} & Y\\
}
  \]
in which $u$ and $v$ are in ${\rm F}\cap {\rm W}$ 
and $\tilde{X}$ and $\tilde{Y}$ are cofibrant. Since
${\rm F}\cap {\rm W}\subseteq {\rm F}^{\mathcal{G}}\cap 
{\rm W}^{\mathcal{G}}$, it follows that $\tilde{X}$ and 
$\tilde{Y}$ are $\mathcal{G}$-fibrant. Thus, $T(\tilde{f})$
is a strict simplicial homotopy equivalence using 
\cite[Proposition 9.5.24(2)]{Hi}, and therefore
$H^{s}T(\tilde{f})$ is an isomorphism.
\end{proof}
Proposition 2.1 implies that the right derived functor
$\mathcal{R}_{\mathcal{G}}H^{s}T$ of $H^{s}T$ 
with respect to ${\bf M}^{\mathcal{G}}$ exists. 
We shall describe a (very particular) way 
to compute it.
\\

Let $\mathcal{G}'$ be a class of objects of {\bf M} 
which is invariant under {\rm W}. That is, if 
$X\rightarrow Y$ is in {\rm W}, then $X\in \mathcal{G}'$
if and only if $Y\in \mathcal{G}'$. We assume that
every $\mathcal{G}$-fibrant object is in $\mathcal{G}'$.
A {\bf weak} $\mathcal{G}$-{\bf fibrant approximation}
to an object $A$ of {\bf M} is a diagram
$A\overset{j}\rightarrow Y$, where $j$ is in 
${\rm W}^{\mathcal{G}}$ and  $Y$ is in $\mathcal{G}'$.

Let $cst:{\bf M}\rightarrow {\bf M}^{\Delta}$ be the
constant cosimplicial object functor. For an object 
$Y\in {\bf M}$, let $\vec{Y}$ be a Reedy fibrant 
approximation to $cstY$ in 
$({\bf M}^{\mathcal{G}})^{\Delta}$, and let
$\bar{\bar{Y}}=Tot\vec{Y}$. We have an induced 
map $\alpha:Y\rightarrow \bar{\bar{Y}}$.
\begin{lemma}
Suppose that the $\mathcal{G}$-cofibrant objects 
coincide with the cofibrant objects of {\bf M} and 
that ${\bf M}^{\mathcal{G}}$ is a simplicial model 
category. Let $\mathcal{N}$ be a simplicial category
and $T:{\bf M}\rightarrow \mathcal{N}$ a simplicial
functor with the property that $T$ sends the maps in 
{\rm W} to isomorphisms. Let $H^{s}:\mathcal{N}
\rightarrow \mathcal{N}'$ be a functor which identifies 
strictly simplicially homotopic maps. Suppose furthermore
that for each fibrant object $Y$ of {\bf M} which 
belongs to $\mathcal{G}'$, the map $\alpha$ is in 
${\rm W}^{\mathcal{G}}$ and the 
map $H^{s}T(\alpha)$ is an isomorphism.
Then $\mathcal{R}_{\mathcal{G}}H^{s}T$ can 
be computed using weak $\mathcal{G}$-fibrant 
approximations.
\end{lemma}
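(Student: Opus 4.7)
The plan is to convert a weak $\mathcal{G}$-fibrant approximation into a bona fide $\mathcal{G}$-fibrant approximation via the $Tot$-construction, and then invoke the defining property of the right derived functor.

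Given a weak $\mathcal{G}$-fibrant approximation $A\overset{j}\rightarrow Y$, I would first factor $Y\rightarrow \ast$ in the original model structure $({\rm W,C,F})$ as a trivial cofibration $k:Y\rightarrow Y'$ followed by a fibration $Y'\rightarrow \ast$. Then $Y'$ is fibrant, and since $\mathcal{G}'$ is invariant under ${\rm W}$ and $Y\in \mathcal{G}'$, we also have $Y'\in \mathcal{G}'$. By the standing hypothesis of the lemma, the map $\alpha:Y'\rightarrow \bar{\bar{Y'}}$ is in ${\rm W}^{\mathcal{G}}$ and $H^{s}T(\alpha)$ is an isomorphism. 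Moreover $\bar{\bar{Y'}}=Tot\,\vec{Y'}$ is $\mathcal{G}$-fibrant, since $\vec{Y'}$ is Reedy fibrant in $({\bf M}^{\mathcal{G}})^{\Delta}$ and ${\bf M}^{\mathcal{G}}$ is a simplicial model category. Consequently the composite $A\rightarrow Y\rightarrow Y'\rightarrow \bar{\bar{Y'}}$ is a $\mathcal{G}$-fibrant approximation to $A$: each of its three pieces lies in ${\rm W}^{\mathcal{G}}$ (the middle one via the inclusion ${\rm W}\subseteq {\rm W}^{\mathcal{G}}$), and the target is $\mathcal{G}$-fibrant.

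By Proposition 2.1 and the usual uniqueness-up-to-canonical-isomorphism of the right derived functor, this yields a canonical isomorphism $\mathcal{R}_{\mathcal{G}}H^{s}T(A)\cong H^{s}T(\bar{\bar{Y'}})$. The remaining task is to compare $H^{s}T(Y)$ with $H^{s}T(\bar{\bar{Y'}})$: the map $H^{s}T(Y\rightarrow Y')$ is an isomorphism because $T$ sends maps in ${\rm W}$ to isomorphisms, and $H^{s}T(Y'\rightarrow \bar{\bar{Y'}})=H^{s}T(\alpha)$ is an isomorphism by hypothesis. Splicing these produces the desired natural identification $H^{s}T(Y)\cong \mathcal{R}_{\mathcal{G}}H^{s}T(A)$.

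The main technical point to pin down will be the claim that $Tot\,\vec{Y'}$ is $\mathcal{G}$-fibrant -- a standard but non-trivial consequence of the simplicial structure on ${\bf M}^{\mathcal{G}}$. Once that is in place, naturality in $A$ follows from the functoriality of the trivial cofibration factorization, of $\vec{(-)}$, and of $Tot$, together with the standard manipulations showing that right derived functors are well-defined up to canonical isomorphism.
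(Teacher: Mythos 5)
Your proposal is correct and follows essentially the same route as the paper: replace $Y$ by a fibrant approximation $Y'$ lying in $\mathcal{G}'$, use the hypothesis on $\alpha$ to land in the $\mathcal{G}$-fibrant object $\bar{\bar{Y'}}$, and compare with the value of $\mathcal{R}_{\mathcal{G}}H^{s}T$ via Proposition 2.1. The only difference is presentational: where you cite the standard uniqueness of derived-functor values on fibrant approximations, the paper makes that step explicit by lifting the composite $A\rightarrow\bar{\bar{\underline{Y}}}$ against a trivial $\mathcal{G}$-cofibration $A\rightarrow\bar{A}$ with $\bar{A}$ $\mathcal{G}$-fibrant.
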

\begin{proof}
Let $A$ be an object of {\bf M}. Let $A\rightarrow 
\bar{A}$ be a map in ${\rm C}^{\mathcal{G}}
\cap {\rm W}^{\mathcal{G}}$ with $\bar{A}$
$\mathcal{G}$-fibrant and $A\rightarrow Y$ a weak 
$\mathcal{G}$-fibrant approximation to
$A$. Let $Y\rightarrow \underline{Y}$ be a 
fibrant approximation to $Y$. The diagram
\[
   \xymatrix{
A \ar[r] \ar[d] & Y \ar[r] & \underline{Y} 
\ar[r]^{\alpha} & \bar{\bar{\underline{Y}}}\\
\bar{A}\\
}
  \]
has a lifting $\bar{A}\rightarrow 
\bar{\bar{\underline{Y}}}$. By Proposition 
2.1 and assumptions it follows that 
$\mathcal{R}_{\mathcal{G}}H^{s}TA\cong
H^{s}TY$.
\end{proof}
To make the connection between Lemma 2.2
and \cite[Theorem 6.2]{Bo}, we take 
${\bf M}=c\mathcal{C}$ and $\mathcal{G}'$
to be the class of termwise $\mathcal{G}$-injective
objects \cite[Definition 6.1]{Bo}. Then a weak 
$\mathcal{G}$-fibrant approximation is just a weak 
$\mathcal{G}$-resolution as in \emph{loc. cit.}.
\begin{proposition}
Suppose that the $\mathcal{G}$-cofibrant objects 
coincide with the cofibrant objects of {\bf M} and that
${\bf M}^{\mathcal{G}}$ is a simplicial model category. 
Let $\mathcal{N}$ be a simplicial model category
and $T:{\bf M}\rightarrow \mathcal{N}$ a simplicial functor
with the property that $T$ sends the maps in {\rm W} 
between fibrant objects to weak equivalences. Then $T$ 
sends the maps in ${\rm W}^{\mathcal{G}}$ between 
$\mathcal{G}$-fibrant objects to weak equivalences. 
\end{proposition}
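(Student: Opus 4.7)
The plan is to adapt almost verbatim the proof of Proposition 2.1, but route the conclusion through the simplicial model structure of $\mathcal{N}$ instead of through the homotopy-identifying functor $H^{s}$. Given $f:X\rightarrow Y$ in ${\rm W}^{\mathcal{G}}$ between $\mathcal{G}$-fibrant objects, I would first produce the same commutative square
\[
   \xymatrix{
\tilde{X} \ar[r]^{u} \ar[d]_{\tilde{f}} & X \ar[d]^{f}\\
\tilde{Y} \ar[r]_{v} & Y\\
}
  \]
as in Proposition 2.1, where $u,v$ are acyclic fibrations in {\bf M} (hence in ${\rm F}\cap{\rm W}$) and $\tilde{X},\tilde{Y}$ are cofibrant. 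Since ${\rm F}\cap{\rm W}\subseteq {\rm F}^{\mathcal{G}}\cap{\rm W}^{\mathcal{G}}$ and $X,Y$ are $\mathcal{G}$-fibrant, both $\tilde{X}$ and $\tilde{Y}$ are also $\mathcal{G}$-fibrant; and since ${\rm F}^{\mathcal{G}}\subseteq{\rm F}$, the objects $X,Y,\tilde{X},\tilde{Y}$ are all fibrant in {\bf M}.

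Next, by two out of three for ${\rm W}^{\mathcal{G}}$ applied to $v\tilde{f}=fu$, the map $\tilde{f}$ lies in ${\rm W}^{\mathcal{G}}$; by the standing hypothesis identifying $\mathcal{G}$-cofibrant and cofibrant objects, $\tilde{X}$ and $\tilde{Y}$ are cofibrant-fibrant in the simplicial model category ${\bf M}^{\mathcal{G}}$. Therefore \cite[Proposition 9.5.24(2)]{Hi} guarantees that $\tilde{f}$ is a strict simplicial homotopy equivalence. Since $T$ is a simplicial functor, $T(\tilde{f})$ is a strict simplicial homotopy equivalence in $\mathcal{N}$, and because $\mathcal{N}$ is a simplicial model category this forces $T(\tilde{f})$ to be a weak equivalence in $\mathcal{N}$.

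To conclude, I would apply the hypothesis on $T$ to the maps $u$ and $v$: they are in {\rm W} between fibrant objects, so $T(u)$ and $T(v)$ are weak equivalences in $\mathcal{N}$. From $T(v)T(\tilde{f})=T(f)T(u)$ and two out of three for the weak equivalences of $\mathcal{N}$, the map $T(f)$ is a weak equivalence, as desired. I do not foresee a genuine obstacle: the argument is essentially a variant of Proposition 2.1 in which the step ``$H^{s}T(\tilde{f})$ is an isomorphism because $T(\tilde{f})$ is a strict simplicial homotopy equivalence'' is replaced by ``$T(\tilde{f})$ is a weak equivalence because strict simplicial homotopy equivalences in a simplicial model category are weak equivalences'', followed by two out of three in $\mathcal{N}$.
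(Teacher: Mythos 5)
Your proposal is correct and follows essentially the same route as the paper: the paper's proof of this proposition simply repeats the argument of Proposition 2.1 (cofibrant approximation square with $u,v\in{\rm F}\cap{\rm W}$, observing $\tilde{X},\tilde{Y}$ are cofibrant and $\mathcal{G}$-fibrant), replacing the $H^{s}$ step by the fact that a simplicial functor between simplicial model categories sends weak equivalences between cofibrant-fibrant objects to weak equivalences, which is exactly the chain you spell out via \cite[Proposition 9.5.24(2)]{Hi} and two out of three in $\mathcal{N}$.
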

\begin{proof}
The proof is the same as for Proposition 2.1, using now
the fact that a simplicial functor between simplicial
model categories sends weak equivalences between
cofibrant-fibrant objects to weak equivalences.
\end{proof}
Proposition 2.3 implies that the total right derived 
functor $\mathcal{R}_{\mathcal{G}}T$ of $T$ 
with respect to ${\bf M}^{\mathcal{G}}$ exists. 
We shall describe a (very particular) way 
to compute it.
\begin{lemma}
Suppose that the $\mathcal{G}$-cofibrant objects 
coincide with the cofibrant objects of {\bf M} and 
that ${\bf M}^{\mathcal{G}}$ is a simplicial model 
category. Let $\mathcal{N}$ be a simplicial model 
category and $T:{\bf M}\rightarrow \mathcal{N}$ a 
simplicial functor with the property that $T$ sends the 
maps in {\rm W} between fibrant objects to weak 
equivalences. Suppose furthermore that for each 
fibrant object $Y$ of {\bf M} which belongs to 
$\mathcal{G}'$, the map $\alpha$ is in 
${\rm W}^{\mathcal{G}}$ and the 
map $T(\alpha)$ is a weak equivalence.
Let $A\rightarrow Y$ be a weak $\mathcal{G}$-fibrant 
approximation to an object $A$. Then 
$\mathcal{R}_{\mathcal{G}}TA\cong 
\mathcal{R}TY$, where $\mathcal{R}T$ is the 
total right derived functor of $T$.
\end{lemma}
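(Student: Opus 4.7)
The plan is to mimic the proof of Lemma 2.2, with ``isomorphism in $\mathcal{N}'$'' replaced by ``weak equivalence in $\mathcal{N}$'' and Proposition 2.1 replaced by Proposition 2.3.

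First I would choose a $\mathcal{G}$-fibrant approximation $A\to\bar{A}$ with $A\to\bar{A}$ in ${\rm C}^{\mathcal{G}}\cap{\rm W}^{\mathcal{G}}$, so that $\mathcal{R}_{\mathcal{G}}TA$ is represented by $T(\bar{A})$, and an ordinary fibrant approximation $Y\to\underline{Y}$ in ${\bf M}$, so that $\mathcal{R}TY$ is represented by $T(\underline{Y})$. Since $\mathcal{G}'$ is ${\rm W}$-invariant and $Y\in\mathcal{G}'$, also $\underline{Y}\in\mathcal{G}'$, so the hypothesis supplies a map $\alpha:\underline{Y}\to\bar{\bar{\underline{Y}}}$ in ${\rm W}^{\mathcal{G}}$ with $T(\alpha)$ a weak equivalence; the target $\bar{\bar{\underline{Y}}}=\mathrm{Tot}\,\vec{\underline{Y}}$ is $\mathcal{G}$-fibrant as the totalization of a Reedy fibrant cosimplicial object of ${\bf M}^{\mathcal{G}}$.

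Next I would form, exactly as in the proof of Lemma 2.2, the diagram
\[
\xymatrix{
A \ar[r] \ar[d] & Y \ar[r] & \underline{Y} \ar[r]^{\alpha} & \bar{\bar{\underline{Y}}}\\
\bar{A}\\
}
\]
and invoke the lifting axiom in ${\bf M}^{\mathcal{G}}$ (applied to $A\to\bar{A}$ in ${\rm C}^{\mathcal{G}}\cap{\rm W}^{\mathcal{G}}$ against $\bar{\bar{\underline{Y}}}\to\ast$ in ${\rm F}^{\mathcal{G}}$) to obtain a lift $\bar{A}\to\bar{\bar{\underline{Y}}}$. Since the composite $A\to\bar{A}\to\bar{\bar{\underline{Y}}}$ equals the top horizontal composite, which lies in ${\rm W}^{\mathcal{G}}$ by construction and by the hypothesis on $\alpha$, the two-out-of-three property forces the lift itself to lie in ${\rm W}^{\mathcal{G}}$. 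Since both $\bar{A}$ and $\bar{\bar{\underline{Y}}}$ are $\mathcal{G}$-fibrant, Proposition 2.3 then applies and gives that $T$ sends the lift to a weak equivalence in $\mathcal{N}$.

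Combining this with $T(\alpha)$ produces the zig-zag of weak equivalences $T(\bar{A})\to T(\bar{\bar{\underline{Y}}})\leftarrow T(\underline{Y})$ in $\mathcal{N}$, establishing $\mathcal{R}_{\mathcal{G}}TA\cong\mathcal{R}TY$ in the homotopy category of $\mathcal{N}$. The one genuinely non-formal point is verifying that $\bar{\bar{\underline{Y}}}$ is $\mathcal{G}$-fibrant, so that the lift sits between $\mathcal{G}$-fibrant objects and Proposition 2.3 is available; this is precisely what the $\mathrm{Tot}$-of-Reedy-fibrant construction of $\alpha$, together with the simplicial structure on ${\bf M}^{\mathcal{G}}$, is designed to guarantee. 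Everything else is the same bookkeeping as in Lemma 2.2.
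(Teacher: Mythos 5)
Your proposal is correct and follows essentially the paper's own route: the paper's proof is literally ``the same as for Lemma 2.2, using Proposition 2.3,'' and your argument is exactly that proof spelled out --- the lifting of $\bar{A}\to\bar{\bar{\underline{Y}}}$ against the $\mathcal{G}$-fibrant object $\bar{\bar{\underline{Y}}}=\mathrm{Tot}\,\vec{\underline{Y}}$, two-out-of-three, Proposition 2.3, and the zig-zag through $T(\alpha)$. The extra details you supply (the $\mathrm{W}$-invariance of $\mathcal{G}'$ giving $\underline{Y}\in\mathcal{G}'$, and fibrancy of $\mathrm{Tot}$ of a Reedy fibrant cosimplicial object in the simplicial model category ${\bf M}^{\mathcal{G}}$) are exactly what the paper leaves implicit.
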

\begin{proof}
The proof is the same as for Lemma 2.2, 
using Proposition 2.3.
\end{proof}
To make the connection between Lemma 2.4 
and the fist part of \cite[Theorem 6.5]{Bo},
we take ${\bf M}=c\mathcal{C}$, $\mathcal{G}'$
to be the class of termwise $\mathcal{G}$-injective
objects, $\mathcal{N}=\mathcal{C}$ and $T=Tot$.
\\

The next result can be seen as a generalization 
of \cite[Proposition 3.4.4]{Hi}.
\begin{proposition}
{\rm (1)} If {\bf M} is left proper and the cofibrant objects
coincide with the $\mathcal{G}$-cofibrant objects, 
then ${\bf M}^{\mathcal{G}}$ is left proper.

{\rm (2)} If {\bf M} is right proper and the fibrant 
objects coincide with the $\mathcal{G}$-fibrant objects, 
then ${\bf M}^{\mathcal{G}}$ is right proper.
\end{proposition}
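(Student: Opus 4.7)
Both parts are formally dual (pass to $\mathbf{M}^{op}$), so I concentrate on (1). For a pushout square with $i\in\mathrm{C}^{\mathcal{G}}$ and $f\in\mathrm{W}^{\mathcal{G}}$, the goal is $B\to Y\in\mathrm{W}^{\mathcal{G}}$. The plan mirrors the proof of Proposition 1.3, with left properness of $\mathbf{M}$ playing the role of assumption (6) of Theorem 1.2 and the hypothesis ``cofibrant $=$ $\mathcal{G}$-cofibrant'' making the mapping cylinder construction from the proof of that theorem applicable here.

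I first reduce to cofibrant $A$ and $B$: factor $\emptyset\to A$ in $\mathbf{M}$ as $\emptyset\to\tilde{A}\to A$ with $\tilde{A}\to A\in\mathrm{F}\cap\mathrm{W}$, and factor the composite $\tilde{A}\to A\to B$ in $\mathbf{M}$ as $\tilde{A}\to\tilde{B}\to B$ with $\tilde{A}\to\tilde{B}\in\mathrm{C}$ and $\tilde{B}\to B\in\mathrm{F}\cap\mathrm{W}$. Both $\tilde{A}$ and $\tilde{B}$ are cofibrant, hence $\mathcal{G}$-cofibrant. Setting $Z=\tilde{B}\sqcup_{\tilde{A}}X$, the cube (gluing) lemma in the left proper category $\mathbf{M}$, applied to the map of pushout spans $(\tilde{B}\leftarrow\tilde{A}\to X)\to(B\leftarrow A\to X)$ (with weak-equivalence verticals and cofibration left arrows), produces $Z\to Y\in\mathrm{W}$. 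Next factor the composite $\tilde{A}\to X$ in $\mathbf{M}$ as $\tilde{A}\to X'\to X$ with $\tilde{A}\to X'\in\mathrm{C}$ and $X'\to X\in\mathrm{F}\cap\mathrm{W}$; then $X'$ is cofibrant and two-out-of-three puts $\tilde{A}\to X'$ in $\mathrm{C}\cap\mathrm{W}^{\mathcal{G}}$. Consecutive pushouts yield $\tilde{B}\to P\to Z$ with $P=\tilde{B}\sqcup_{\tilde{A}}X'$; the second leg $P\to Z$ is a pushout of $X'\to X\in\mathrm{F}\cap\mathrm{W}$ along the cofibration $X'\to P$, so left properness of $\mathbf{M}$ gives $P\to Z\in\mathrm{W}$.

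The main obstacle is showing the first leg $\tilde{B}\to P$ lies in $\mathrm{W}^{\mathcal{G}}$: the map $\tilde{A}\to X'$ sits in $\mathrm{C}\cap\mathrm{W}^{\mathcal{G}}$ but need not belong to $\mathrm{C}^{\mathcal{G}}$, so closure of trivial cofibrations of $\mathbf{M}^{\mathcal{G}}$ under pushouts does not apply directly. I circumvent this by running the mapping cylinder construction from the proof of Theorem 1.2 for $g:=\tilde{A}\to X'$: factor the fold $\tilde{A}\sqcup\tilde{A}\to\tilde{A}$ in $\mathbf{M}^{\mathcal{G}}$ as a cofibration followed by a trivial fibration, producing a cylinder object with inclusion in $\mathrm{C}^{\mathcal{G}}$ and projection in $\mathrm{W}^{\mathcal{G}}$, and then form $M_{g}$ exactly as in that proof. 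Closure of $\mathrm{C}^{\mathcal{G}}$ under compositions and pushouts, together with cofibrant $=$ $\mathcal{G}$-cofibrant, gives $i_{g}\in\mathrm{C}^{\mathcal{G}}$; one shows $j_{g}\in\mathrm{C}^{\mathcal{G}}\cap\mathrm{W}^{\mathcal{G}}$ verbatim as in the proof of Theorem 1.2, whence $p_{g}\in\mathrm{W}^{\mathcal{G}}$ by two-out-of-three using $p_{g}j_{g}=\mathrm{id}_{X'}$, and finally $i_{g}\in\mathrm{C}^{\mathcal{G}}\cap\mathrm{W}^{\mathcal{G}}$. The crucial identity $j_{g}g=i_{g}$ (read off from $i_{g}=h(\tilde{A}\sqcup g)i_{1}$ and $j_{g}=h\sigma_{X'}$) makes $j_{g}$ a morphism under $\tilde{A}$. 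Pushing the factorization along $\tilde{A}\to\tilde{B}$ yields $\tilde{B}\to P_{M}\to P$ in which $\tilde{B}\to P_{M}$ is the pushout of the trivial cofibration $i_{g}$, hence in $\mathrm{C}^{\mathcal{G}}\cap\mathrm{W}^{\mathcal{G}}\subseteq\mathrm{W}^{\mathcal{G}}$. The under-$\tilde{A}$ maps $j_{g}$ and $p_{g}$ induce maps $P\to P_{M}$ and $P_{M}\to P$ whose composite is the pushout of $p_{g}j_{g}=\mathrm{id}_{X'}$, namely $\mathrm{id}_{P}$. This exhibits $(\tilde{B}\to P)$ as a retract of $(\tilde{B}\to P_{M})$ in the arrow category, and closure of $\mathrm{W}^{\mathcal{G}}$ under retracts yields $\tilde{B}\to P\in\mathrm{W}^{\mathcal{G}}$.

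Combining $\tilde{B}\to P\in\mathrm{W}^{\mathcal{G}}$, $P\to Z\in\mathrm{W}$, $Z\to Y\in\mathrm{W}$, and $\tilde{B}\to B\in\mathrm{W}$ via two-out-of-three in $\mathrm{W}^{\mathcal{G}}$ delivers $B\to Y\in\mathrm{W}^{\mathcal{G}}$. Part (2) follows by the dual argument in $\mathbf{M}^{op}$, using path objects in $\mathbf{M}^{\mathcal{G}}$ and fibrant replacements via the $(\mathrm{C}\cap\mathrm{W},\mathrm{F})$-factorization in $\mathbf{M}$.
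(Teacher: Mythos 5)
Your skeleton is the paper's: reduce to cofibrant objects, factor the ${\rm W}^{\mathcal{G}}$-map in {\bf M} as a cofibration followed by a map in ${\rm F}\cap{\rm W}$, take consecutive pushouts, use left properness of {\bf M} for the leg that is a pushout of an ${\rm F}\cap{\rm W}$-map, and handle the remaining leg by a mapping-cylinder argument (this is Proposition 1.3 with left properness of {\bf M} in place of assumption (6)). But the ``crucial identity'' $j_{g}g=i_{g}$ on which your retract rests is false. Commutativity of the middle pushout square in the construction forces $(\tilde{A}\sqcup g)i_{0}=\sigma_{X'}g$, so $i_{g}=h(\tilde{A}\sqcup g)i_{1}=h\sigma_{\tilde{A}}$ is (the image in $M_{g}$ of) the end-$1$ inclusion of the cylinder, while $j_{g}g=h\sigma_{X'}g$ is the end-$0$ inclusion; these are two different maps $\tilde{A}\to M_{g}$, homotopic via $\pi_{g}$ but not equal (already for $g=\mathrm{id}$ and a nontrivial cylinder they differ, and if $i_{g}=j_{g}g$ held then $i_{g}$ would factor through $g$ and the argument that $i_{g}\in{\rm C}^{\mathcal{G}}$ would collapse). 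Hence the square needed to induce your map $P\to P_{M}$ does not commute, that map does not exist, and the retract exhibiting $\tilde{B}\to P$ as lying in ${\rm W}^{\mathcal{G}}$ is not there.

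This cannot be patched inside your setup, because your reduction also discarded the hypothesis $i\in{\rm C}^{\mathcal{G}}$: after it you must prove that the pushout of a map in ${\rm C}\cap{\rm W}^{\mathcal{G}}$ between cofibrant objects along a mere {\bf M}-cofibration $\tilde{A}\to\tilde{B}$ lies in ${\rm W}^{\mathcal{G}}$. Note that your retract argument never uses any property of $\tilde{A}\to\tilde{B}$, so if it worked it would show that pushouts of weak equivalences between cofibrant objects along \emph{arbitrary} maps are weak equivalences, which is false in general; the true statement (Reedy's lemma, \cite[Proposition 13.1.2]{Hi}) requires the map one pushes along to be a cofibration for the same model structure whose weak equivalences are in play, here ${\rm C}^{\mathcal{G}}$, and its proof is a genuine homotopy argument, not a strict retract. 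The intended route is: reduce as in \cite[Proposition 13.2.1(1)]{Hi} applied to ${\bf M}^{\mathcal{G}}$, so that $i$ stays in ${\rm C}^{\mathcal{G}}$ with cofibrant domain and codomain (the same reduction the paper invokes in Proposition 2.6); then factor $f$ in {\bf M}, take consecutive pushouts; the first leg is a pushout in ${\bf M}^{\mathcal{G}}$ of a ${\rm W}^{\mathcal{G}}$-equivalence between ${\bf M}^{\mathcal{G}}$-cofibrant objects (which coincide with the cofibrant objects, by hypothesis) along a map of ${\rm C}^{\mathcal{G}}$, hence lies in ${\rm W}^{\mathcal{G}}$ by \cite[Proposition 13.1.2]{Hi}; the second leg lies in ${\rm W}$ by left properness of {\bf M}. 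With that repair the rest of your argument (two out of three, and the dual proof of (2)) goes through.
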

\begin{proof}
The proof of (1) is similar to the proof of 
Proposition 1.3. The proof of (2) is dual.
\end{proof}
The next result can be seen as a generalization 
of \cite[Propositions 3.3.15 and 3.4.6]{Hi}.
To state it, we introduce some terminology.
Let {\bf M}$_{0}$ and {\bf M}$_{1}$ be two
full subcategories of {\bf M} with ${\bf M}_{0}
\subseteq {\bf M}_{1}$. We say that {\bf M}$_{0}$
is {\bf invariant in} {\bf M}$_{1}$ 
{\bf under} {\rm W} if for every commutative 
diagram
\[
   \xymatrix{
A \ar[r]^{u} \ar[d]_{f} & A' \ar[d]^{g}\\
B \ar[r]_{v} & B'\\
}
  \]
in which $u$ and $v$ are in {\rm W}
and $f$ and $g$ are in {\bf M}$_{1}$,
$f$ is in {\bf M}$_{0}$ if and only if 
$g$ is in {\bf M}$_{0}$.
\begin{proposition}
{\rm (1)} Let 
\[
   \xymatrix{
X \ar[rr]^{h} \ar[dr]_{f} & & Y \ar[dl]^{g}\\
& Z\\
}
  \]
be a commutative diagram in {\bf M} 
with $f\in {\rm F}$, $g\in {\rm F}^{\mathcal{G}}$
and $h\in {\rm W}$. Then $f\in {\rm F}^{\mathcal{G}}$.
If {\bf M} is left proper and ${\rm C}^{\mathcal{G}}$
is invariant in {\rm C} under {\rm W}, then the converse
holds, that is, $f\in {\rm F}^{\mathcal{G}}$ and 
$g\in {\rm F}$ imply $g\in {\rm F}^{\mathcal{G}}$.

{\rm (2)} Let 
\[
   \xymatrix{
& A \ar[dl]_{f} \ar[dr]^{g}\\
B \ar[rr]_{h}  & & C\\
}
  \]
be a commutative diagram in {\bf M} 
with $f\in {\rm C}^{\mathcal{G}}$, $g\in {\rm C}$,
and $h\in {\rm W}$. Then $g\in {\rm C}^{\mathcal{G}}$.
If {\bf M} is right proper and ${\rm F}^{\mathcal{G}}$
is invariant in {\rm F} under {\rm W}, then the converse
holds, that is, $g\in {\rm C}^{\mathcal{G}}$ and 
$f\in {\rm C}$ imply $f\in {\rm C}^{\mathcal{G}}$.
\end{proposition}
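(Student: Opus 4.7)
The plan is to prove the forward implications in (1) and (2) by a short retract argument using factorizations in the given model structure together with the inclusions among the classes, and to prove the converse implications by reducing to the forward case via cofibrant (respectively fibrant) replacement and then transporting back with the help of the properness and invariance hypotheses.

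For the forward direction of (1), I factor $h = \bar h \circ h'$ in $({\rm W,C,F})$ with $h' : X \to X'$ in ${\rm C} \cap {\rm W}$ and $\bar h : X' \to Y$ in ${\rm F} \cap {\rm W}$. The inclusions ${\rm C}^{\mathcal{G}} \subseteq {\rm C}$ and ${\rm W} \subseteq {\rm W}^{\mathcal{G}}$ imply that ${\rm F} \cap {\rm W} \subseteq {\rm F}^{\mathcal{G}}$ and ${\rm C} \cap {\rm W} \subseteq {\rm C}^{\mathcal{G}} \cap {\rm W}^{\mathcal{G}}$, so $\bar h \in {\rm F}^{\mathcal{G}}$ and hence $g\bar h \in {\rm F}^{\mathcal{G}}$ by closure under composition. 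Writing $f = (g\bar h)\, h'$ and lifting $1_X$ along $h' \in {\rm C} \cap {\rm W}$ against $f \in {\rm F}$ yields $r : X' \to X$ with $r h' = 1_X$ and $f r = g\bar h$, exhibiting $f$ as a retract of $g \bar h$; closure of ${\rm F}^{\mathcal{G}}$ under retracts gives $f \in {\rm F}^{\mathcal{G}}$. The forward direction of (2) is dual: factor $h = h''\, h'$ with $h' \in {\rm C} \cap {\rm W}$ and $h'' \in {\rm F} \cap {\rm W}$, observe that $h' f \in {\rm C}^{\mathcal{G}}$, and use a section of $h''$ (obtained by lifting $g$ against $h''$) to present $g$ as a retract of $h' f$.

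For the converse of (1), given a lifting problem $(j, a, b)$ for $g$ with $j \in {\rm C}^{\mathcal{G}} \cap {\rm W}^{\mathcal{G}}$, the idea is to replace $j$ by a cofibration $\tilde j : \tilde A \to \tilde B$ between cofibrant objects by taking cofibrant replacements of source and target and factoring; the invariance of ${\rm C}^{\mathcal{G}}$ in ${\rm C}$ under ${\rm W}$ together with two-out-of-three in ${\rm W}^{\mathcal{G}}$ places $\tilde j$ in ${\rm C}^{\mathcal{G}} \cap {\rm W}^{\mathcal{G}}$. Using the factorization $h = \bar h h'$, cofibrancy of $\tilde A$ lets us lift $\tilde A \to A \to Y$ through $\bar h$ to $\tilde A \to X'$, and the right lifting property of $f \in {\rm F}^{\mathcal{G}}$ against $\tilde j$ (via the induced square into $X$) yields a lift $\tilde B \to X$, which composed with $h$ gives the required lift $\tilde B \to Y$ of the $\tilde j$-problem. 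To transport this back to $B$, I form the pushout $D = \tilde B \cup_{\tilde A} A$: closure of ${\rm C}^{\mathcal{G}} \cap {\rm W}^{\mathcal{G}}$ under pushouts places $A \to D$ in this class, and left properness of ${\bf M}$ makes $\tilde B \to D$, and hence (by two-out-of-three) $D \to B$, a weak equivalence in ${\rm W}$; a further factorization of $D \to B$ and lifting in $({\rm W,C,F})$ then extends the induced map $D \to Y$ to the desired lift $B \to Y$. The converse of (2) is entirely dual, via fibrant replacement, ${\rm F}^{\mathcal{G}}$-invariance, pullback, and right properness.

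The main obstacle will be this final transport step in the converse directions: after solving the lifting problem over the cofibrantly (respectively fibrantly) replaced objects, reconciling the solution with a lift against the original $j$ (respectively $p$) is where the properness hypothesis and the invariance hypothesis combine most delicately, and where the argument becomes technical.
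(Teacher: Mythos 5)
Your forward implications in (1) and (2) are correct and are essentially the paper's argument: factor $h$, observe that ${\rm F}\cap{\rm W}\subseteq{\rm F}^{\mathcal{G}}$ and ${\rm C}\cap{\rm W}\subseteq{\rm C}^{\mathcal{G}}$, and exhibit $f$ (resp.\ $g$) as a retract via a lift. The reduction of the converse to a map $\tilde j$ between cofibrant objects, using the invariance hypothesis and two-out-of-three, also matches the paper, which performs this reduction by citing Hirschhorn's Proposition 13.2.1(1).

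The genuine gap is in the step you treat as finished: solving the lifting problem for $\tilde j$ against $g$. From the lift $c:\tilde A\to X'$ with $\bar h c=aq_A$ you do not have a square into $X$ at all until you choose a retraction $r:X'\to X$ of $h'$ (obtained by lifting $1_X$ against $f\in{\rm F}$, as in your forward argument); granting that, the lift $m:\tilde B\to X$ against $f\in{\rm F}^{\mathcal{G}}$ satisfies $m\tilde j=rc$, and the composite $hm:\tilde B\to Y$ satisfies $g(hm)=bq_B$ but $(hm)\tilde j=\bar h h' r c$, which is \emph{not} equal to $aq_A=\bar h c$, since $h'r\neq 1_{X'}$. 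So $hm$ does not solve the $\tilde j$-problem on the nose, only up to a homotopy over $Z$; and the on-the-nose compatibility is exactly what you need in the next step even to define the induced map $D\to Y$ out of the pushout. Repairing this requires a homotopy-correction argument (the upper triangle commutes up to a fiberwise homotopy over $Z$, which one then rectifies using that $\tilde j$ is a cofibration between cofibrant objects and $g\in{\rm F}$, via the homotopy extension/lifting property); this is precisely the technical content that the paper outsources by saying the proof ``proceeds exactly as in'' Hirschhorn's Proposition 3.4.6(1). By contrast, the step you flag at the end as the delicate one --- transporting the solved $\tilde j$-problem back to $j$ via the pushout, left properness, a factorization of $D\to B$, and a lift of $j$ itself against the resulting trivial fibration --- is in fact the routine part (it is essentially Hirschhorn 13.2.1(1)); so the difficulty is located in the wrong place, and as written the converse directions are not proved.
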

\begin{proof}
We will prove (1); the proof of (2) is dual. 
We prove the first part. Factor the map $h$ as 
a map $X\rightarrow E$ in {\rm C} followed by a 
map $q:E\rightarrow Y$ in ${\rm F}\cap {\rm W}$.
The diagram 
\[
   \xymatrix{
X \ar @{=} [rr] \ar[d] & & X \ar[d]^{f}\\
E \ar[r]_{q} & Y \ar[r]_{g} & Z\\
}
  \]
has a lifting, so $f$ is a retract of $gq$. 
But $gq\in {\rm F}^{\mathcal{G}}$.

We now prove the converse. We will show 
that every commutative diagram 
\[
   \xymatrix{
A \ar[r]^{t} \ar[d]_{i} & Y \ar[d]^{g}\\
B \ar[r]_{u} & Z\\
}
  \]
where $i$ is in ${\rm C}^{\mathcal{G}}\cap 
{\rm W}^{\mathcal{G}}$ has a lifting.
We can construct a commutative diagram
\[
   \xymatrix{
\tilde{A} \ar[r]^{q} \ar[d]_{\tilde{i}} & A \ar[d]^{i}\\
\tilde{B} \ar[r]_{r} & B\\
}
  \]
in which $q$ and $r$ are in {\rm W}, $\tilde{A}$ 
and  $\tilde{B}$ are cofibrant and $\tilde{i}$ 
is a cofibration. By assumption and 
\cite[Proposition 13.2.1(1)]{Hi} we may
assume without loss of generality that $i$ 
has cofibrant domain to begin with.
Then the proof proceeds exactly as in 
\cite[Proposition 3.4.6(1)]{Hi}.
\end{proof}
Let {\bf N} be another model category and
let $S:{\bf M}\rightleftarrows {\bf N}:T$
be a Quillen pair in which $S$ is the left 
adjoint. Let ${\bf N}^{\mathcal{G}}$ be an 
$\ell\ell$-extension of {\bf N}. The fibrant objects 
of ${\bf N}^{\mathcal{G}}$ will be referred to 
as $\mathcal{G}$-{\bf fibrant}. We assume
that the adjoint pair $(S,T)$ is also a Quillen 
pair with respect to the model structures 
${\bf M}^{\mathcal{G}}$ and 
${\bf N}^{\mathcal{G}}$. As such, we 
denote it by $(S^{\mathcal{G}},T^{\mathcal{G}})$.
\begin{proposition}
{\rm (1)} Suppose that the total right derived 
functor of $T$ is full and faithful. If the 
$\mathcal{G}$-cofibrant objects of {\bf M}
coincide with the cofibrant objects of 
{\bf M}, then the total right derived functor 
of $T^{\mathcal{G}}$ is full and faithful.

{\rm (2)} Suppose that the total left derived 
functor of $S$ is full and faithful. If the 
$\mathcal{G}$-fibrant objects of {\bf N}
coincide with the fibrant objects of {\bf N}, then 
the total left derived functor of $S^{\mathcal{G}}$ 
is full and faithful.
\end{proposition}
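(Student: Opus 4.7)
My plan is to use the standard criterion: for a Quillen adjunction between model categories, the total right derived functor of the right adjoint is full and faithful if and only if for every fibrant object the derived counit is a weak equivalence; dually, the total left derived functor of the left adjoint is full and faithful if and only if for every cofibrant object the derived unit is a weak equivalence. The key observation will be that, under the respective coincidence hypothesis, the $\mathcal{G}$-derived functors can be computed using replacements produced in the original model structure.

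For (1), I would start with an arbitrary $\mathcal{G}$-fibrant $X \in {\bf N}$, which is in particular fibrant in ${\bf N}$ since ${\rm F}^{\mathcal{G}} \subseteq {\rm F}$. Take a cofibrant approximation $q \colon QTX \rightarrow TX$ in ${\bf M}$, for instance a trivial fibration with $QTX$ cofibrant. By hypothesis $QTX$ is also $\mathcal{G}$-cofibrant, and since $q \in {\rm W} \subset {\rm W}^{\mathcal{G}}$, the map $q$ is simultaneously a $\mathcal{G}$-cofibrant approximation of $T^{\mathcal{G}}X = TX$. Consequently the $\mathcal{G}$-derived counit at $X$ is represented by the composite $S(QTX) \rightarrow STX \rightarrow X$, which coincides with the ordinary derived counit of $(S,T)$ at the fibrant $X$. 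The hypothesis on the ordinary $\mathcal{R}T$ places this composite in ${\rm W}$, hence in ${\rm W}^{\mathcal{G}}$, which yields the conclusion.

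Part (2) I would handle dually. Starting with a $\mathcal{G}$-cofibrant, hence cofibrant, object $A \in {\bf M}$, I take a fibrant approximation $r \colon SA \rightarrow Z$ in ${\bf N}$ with $Z$ fibrant. The coincidence hypothesis on $\mathcal{G}$-fibrants of ${\bf N}$ makes $Z$ also $\mathcal{G}$-fibrant, and $r$, being in ${\rm W} \subset {\rm W}^{\mathcal{G}}$, is a $\mathcal{G}$-fibrant approximation of $S^{\mathcal{G}}A$. The $\mathcal{G}$-derived unit at $A$ is then represented by $A \rightarrow TSA \rightarrow TZ$, i.e. the ordinary derived unit at the cofibrant $A$, which is a weak equivalence by the hypothesis on the ordinary total left derived functor of $S$ and hence a $\mathcal{G}$-weak equivalence.

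No deep obstacle arises: the whole argument reduces to identifying the $\mathcal{G}$-derived functor with the ordinary derived functor at the relevant objects, and this identification is provided directly by the respective coincidence hypothesis together with the inclusion ${\rm W} \subset {\rm W}^{\mathcal{G}}$ on each side.
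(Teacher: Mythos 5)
Your proposal is correct and is essentially the paper's own argument: reduce to showing that the derived counit at each $\mathcal{G}$-fibrant $X$ (computed from some cofibrant approximation to $TX$ in ${\bf M}^{\mathcal{G}}$) lies in ${\rm W}^{\mathcal{G}}$, observe that an ordinary cofibrant approximation in ${\bf M}$ already serves because cofibrant $=$ $\mathcal{G}$-cofibrant and ${\rm W}\subset{\rm W}^{\mathcal{G}}$, and then use that $\mathcal{G}$-fibrant objects are fibrant together with the hypothesis on $\mathcal{R}T$; part (2) is dual, exactly as in the paper.
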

\begin{proof}
We will prove (1); the proof of (2) is dual. It is
sufficient to prove that for every $\mathcal{G}$-fibrant 
object $X$ of {\bf N} and for some cofibrant approximation 
$\widetilde{C}TX$ to $TX$ in ${\bf M}^{\mathcal{G}}$,
the composite map $S\widetilde{C}TX\rightarrow 
STX\rightarrow X$ is in ${\rm W}^{\mathcal{G}}$.
Let $\widetilde{C}TX$ be any cofibrant approximation
to $TX$ in {\bf M}. Since a $\mathcal{G}$-fibrant 
object is fibrant, the composite map 
$S\widetilde{C}TX\rightarrow STX\rightarrow X$ 
is in {\rm W} by hypothesis.
\end{proof}
\begin{corollary}
If $(S,T)$ is a Quillen equivalence, the 
$\mathcal{G}$-cofibrant objects of {\bf M}
coincide with the cofibrant objects of 
{\bf M} and the $\mathcal{G}$-fibrant objects 
of {\bf N} coincide with the fibrant objects of 
{\bf N}, then $(S^{\mathcal{G}},T^{\mathcal{G}})$
is a Quillen equivalence.
\end{corollary}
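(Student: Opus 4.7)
The plan is to deduce the corollary directly from Proposition 2.7, invoking the standard characterization that a Quillen pair is a Quillen equivalence precisely when \emph{both} the total left derived functor of the left adjoint and the total right derived functor of the right adjoint are full and faithful (equivalently, when the derived adjunction on homotopy categories is an adjoint equivalence). This characterization reduces the problem to two independent full-and-faithfulness checks.

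First, since $(S,T)$ is a Quillen equivalence, the total right derived functor $\mathcal{R}T$ is an equivalence of homotopy categories, and in particular full and faithful. Combined with the hypothesis that the $\mathcal{G}$-cofibrant objects of {\bf M} coincide with the cofibrant objects of {\bf M}, Proposition 2.7(1) applies and yields that the total right derived functor of $T^{\mathcal{G}}$ is full and faithful. Symmetrically, $\mathcal{L}S$ is full and faithful, and the assumption that the $\mathcal{G}$-fibrant objects of {\bf N} coincide with the fibrant objects of {\bf N} lets us invoke Proposition 2.7(2) to conclude that the total left derived functor of $S^{\mathcal{G}}$ is full and faithful.

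Having both derived functors full and faithful, the derived unit and derived counit of the adjunction $(\mathcal{L}S^{\mathcal{G}}, \mathcal{R}T^{\mathcal{G}})$ are both isomorphisms, so this is an adjoint equivalence of homotopy categories. Hence $(S^{\mathcal{G}}, T^{\mathcal{G}})$ is a Quillen equivalence. There is no real obstacle: the entire content of the corollary is packaged in Proposition 2.7, and the only nontrivial ingredient beyond that is the purely formal fact that mutual full-and-faithfulness of a pair of adjoint functors forces the adjunction to be an equivalence.
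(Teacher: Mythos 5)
Your argument is correct and is exactly the intended derivation: the paper states this as an immediate consequence of Proposition 2.7, relying on the same standard fact that a Quillen pair is a Quillen equivalence precisely when both total derived adjoints are full and faithful (equivalently, the derived unit and counit are isomorphisms), together with the standing assumption that $(S,T)$ is also a Quillen pair for the $\ell\ell$-extensions. No gaps to report.
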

The next result can be seen as a generalization 
of the fact \cite[4.1.1(4)]{Hi} that ``a left Bousfield 
localization of a simplicial model category
is a simplicial model category''. Its proof
shows that \emph{loc. cit.} is actually a formal 
result. We recall from \cite[4.2.6 and 4.2.18]{Ho}
the notions of monoidal model category and 
$\mathcal{C}$-model category. In what follows 
we shall neglect the second part of these definitions.
\begin{proposition}
Suppose that {\bf M} is a 
$\mathcal{V}$-model category, for some
cofibrantly generated monoidal model 
category $\mathcal{V}$ which has a 
generating set of cofibrations with 
cofibrant domains. Let us write $X\ast K$
and $X^{K}$ for the tensor and cotensor
of $X\in {\bf M}$ with an object $K$ of 
$\mathcal{V}$. Then ${\bf M}^{\mathcal{G}}$ 
is a $\mathcal{V}$-model category (for the 
same tensor and cotensor) if and only if 

{\rm (1)} for every map $A\rightarrow B$ 
in ${\rm C}^{\mathcal{G}}$ and every generating
cofibration $J\rightarrow K$ of $\mathcal{V}$, 
the map $$A\ast K\underset{A\ast J}\coprod 
B\ast J\rightarrow B\ast K$$
is in ${\rm C}^{\mathcal{G}}$, and 

{\rm (2)} for every map $X\rightarrow Y$
in ${\rm F}^{\mathcal{G}}$ between
$\mathcal{G}$-fibrant objects and every 
object $K$ belonging to the set of domains 
and codomains of the generating cofibrations 
of $\mathcal{V}$, the map $X^{K}
\rightarrow Y^{K}$ is in ${\rm F}^{\mathcal{G}}$.

If {\bf M} is right proper,
{\rm (1)} can be replaced by

{\rm (1')} for every generating cofibration 
$J\rightarrow K$ of $\mathcal{V}$ and
every map $X\rightarrow Y$
in ${\rm F}\cap {\rm W}^{\mathcal{G}}$ 
between fibrant objects, the map
$$X^{K}\rightarrow X^{J}\underset{Y^{J}}
\prod Y^{K}$$
is in ${\rm F}^{\mathcal{G}}\cap 
{\rm W}^{\mathcal{G}}$.
\end{proposition}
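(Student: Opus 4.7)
The plan is to verify the pushout-product axiom for ${\bf M}^{\mathcal{G}}$ as a $\mathcal{V}$-model category, with (1) and (2) identified as the essential ingredients. The necessity direction is immediate: if ${\bf M}^{\mathcal{G}}$ is a $\mathcal{V}$-model category, then (1) is an instance of the pushout-product axiom applied to $i\in {\rm C}^{\mathcal{G}}$ and a generating cofibration of $\mathcal{V}$. For (2), the domains (by hypothesis) and codomains (as targets of cofibrations from cofibrant sources) of generating cofibrations are cofibrant in $\mathcal{V}$, so $\emptyset\to K$ is a cofibration in $\mathcal{V}$ for each such $K$; applying the cotensor form of the pushout-product axiom to $\emptyset\to K$ and a $\mathcal{G}$-fibration $p\colon X\to Y$ yields $X^K\to Y^K$ as a $\mathcal{G}$-fibration, which implies (2).

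For sufficiency, assume (1) and (2). Fixing $i\in {\rm C}^{\mathcal{G}}$, the class of cofibrations $j$ of $\mathcal{V}$ for which $i\square j\in {\rm C}^{\mathcal{G}}$ is closed under pushouts, transfinite compositions, and retracts, since $i\square(-)$ preserves colimits and ${\rm C}^{\mathcal{G}}$ is saturation-closed; by (1) it contains the generating cofibrations, hence all cofibrations of $\mathcal{V}$, establishing the cofibration clause. When $j$ is a trivial cofibration of $\mathcal{V}$ and $i\in {\rm C}^{\mathcal{G}}\subseteq {\rm C}$, $i\square j$ is a trivial cofibration in ${\bf M}$ by ${\bf M}$'s $\mathcal{V}$-structure, so $i\square j\in {\rm C}\cap {\rm W}\subseteq {\rm W}^{\mathcal{G}}$. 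The remaining case, $i\in {\rm C}^{\mathcal{G}}\cap {\rm W}^{\mathcal{G}}$ and $j$ a cofibration of $\mathcal{V}$, is handled via the two-variable adjunction: $i\square j\in {\rm W}^{\mathcal{G}}$ for all such $i$ is equivalent to the pullback-hom $p^{\square j}\colon X^K\to Y^K\times_{Y^J}X^J$ being a $\mathcal{G}$-fibration for every $\mathcal{G}$-fibration $p$ and cofibration $j\colon J\to K$ of $\mathcal{V}$. A saturation argument in $j$ (using closure of ${\rm F}^{\mathcal{G}}$ under pullbacks, limits, and retracts) reduces this to $j$ generating, and (2), combined with the fact that $p^{\square j}$ is already a fibration in ${\bf M}$, yields the claim in the $\mathcal{G}$-fibrant case; a $\mathcal{G}$-fibrant-replacement argument then extends to arbitrary $\mathcal{G}$-fibrations $p$.

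For the variant with (1'), when ${\bf M}$ is right proper, (1) and (1') are interconvertible via the two-variable adjunction together with the right-properness fact that pullbacks of weak equivalences along fibrations are weak equivalences, so that the pullback-hom formulation of (1') may substitute for the pushout-product formulation of (1). The technical heart of the proof is the last step of sufficiency: extracting that $p^{\square j}$ is a $\mathcal{G}$-fibration from condition (2), which only supplies fibrancy of the individual cotensors $X^K\to Y^K$ and $X^J\to Y^J$ between $\mathcal{G}$-fibrant objects. The matching/pullback-hom condition does not in general follow from pointwise fibrancy, so one must exploit the specific structure that these cotensors arise by applying cotensor to the same $p$ with the endpoints of a cofibration $j$ of $\mathcal{V}$, together with the $\mathcal{V}$-model structure on ${\bf M}$, to complete the deduction.
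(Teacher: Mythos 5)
Your necessity direction, the saturation argument reducing the ${\rm C}^{\mathcal{G}}$-clause to generating cofibrations, and the case of a trivial cofibration $j$ of $\mathcal{V}$ are all fine. But the crucial case --- $i\in {\rm C}^{\mathcal{G}}\cap{\rm W}^{\mathcal{G}}$ and $j$ a cofibration of $\mathcal{V}$, where one must get $i\square j\in{\rm W}^{\mathcal{G}}$ --- is not actually proved. You reformulate it as ``$p^{\square j}\colon X^{K}\to X^{J}\times_{Y^{J}}Y^{K}$ is in ${\rm F}^{\mathcal{G}}$ for every $\mathcal{G}$-fibration $p$'' and then concede that (2), which only controls the individual cotensors $X^{K}\to Y^{K}$, does not yield this, ending with ``one must exploit the specific structure \dots to complete the deduction.'' That is exactly the gap: the hard step is left as an appeal to unspecified structure, and the proposed ``$\mathcal{G}$-fibrant-replacement argument'' for passing from $\mathcal{G}$-fibrant $p$ to arbitrary $p$ is likewise unjustified. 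The same vagueness affects your one-sentence treatment of the $(1')$ variant.

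The missing idea is the one the paper's (very terse) proof hinges on: by \cite[Lemma 7.14]{JT}, in the model category ${\bf M}^{\mathcal{G}}$ a cofibration lies in ${\rm W}^{\mathcal{G}}$ as soon as it has the left lifting property against fibrations \emph{between fibrant objects}. This is what makes the pointwise condition (2) sufficient, and it also removes any need to know that $p^{\square j}$ is a $\mathcal{G}$-fibration for a general generating $j$. Concretely: for $p\in{\rm F}^{\mathcal{G}}$ between $\mathcal{G}$-fibrant objects and $K$ a (necessarily cofibrant) domain or codomain of a generating cofibration, the pullback-hom of $p$ with $\emptyset\to K$ is literally $X^{K}\to Y^{K}$, so (2) plus adjunction plus the cited lemma gives $i\ast K\colon A\ast K\to B\ast K$ in ${\rm C}^{\mathcal{G}}\cap{\rm W}^{\mathcal{G}}$ for every $i\colon A\to B$ in ${\rm C}^{\mathcal{G}}\cap{\rm W}^{\mathcal{G}}$; then for a generating $j\colon J\to K$ the map $A\ast K\to A\ast K\cup_{A\ast J}B\ast J$ is a cobase change of $i\ast J$, hence a $\mathcal{G}$-trivial cofibration, and two-out-of-three applied to the composite $A\ast K\to A\ast K\cup_{A\ast J}B\ast J\to B\ast K=i\ast K$ puts $i\square j$ in ${\rm W}^{\mathcal{G}}$; once this holds it automatically holds against \emph{all} $\mathcal{G}$-fibrations, so no replacement argument is needed. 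For the $(1')$ statement the paper invokes \cite[Proposition 13.2.1(2)]{Hi}, i.e.\ right properness is used to reduce lifting against ${\rm F}^{\mathcal{G}}\cap{\rm W}^{\mathcal{G}}$ to fibrations with fibrant targets, which is the substantive content your ``interconvertible via the two-variable adjunction'' remark skips. So the overall strategy is reasonable, but without these ingredients the proof is incomplete at its central point.
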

\begin{proof}
For the equivalence between the (first part of the)
$\mathcal{V}$-model category axiom and (1)
and (2) one uses the fact that, in a model category, 
a cofibration is a weak equivalence if and only if it 
has the left lifting property with respect to every
fibration between fibrant objects \cite[Lemma 7.14]{JT}. 
For the rest one uses \cite[13.2.1(2)]{Hi}.
\end{proof}
\begin{lemma}
Suppose that {\bf M} is a 
$\mathcal{V}$-model category, for some
cofibrantly generated monoidal model 
category $\mathcal{V}$ which has a 
generating set of cofibrations with 
cofibrant domains. Let us write $X\ast K$
for the tensor of $X\in {\bf M}$ with an object 
$K$ of $\mathcal{V}$. Assume that 
${\bf M}^{\mathcal{G}}$ is a right 
Bousfield localization of {\bf M}.
Then ${\bf M}^{\mathcal{G}}$ 
is a $\mathcal{V}$-model category (for the 
same structure) if and only if for every 
$K$ belonging to the set of domains 
and codomains of the generating cofibrations 
of $\mathcal{V}$ and every $\mathcal{G}$-cofibrant
object $A$, $K\ast A$ is $\mathcal{G}$-cofibrant.
\end{lemma}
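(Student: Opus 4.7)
The plan is to apply Proposition~2.9, so I must verify its conditions (1) and (2). Since ${\bf M}^{\mathcal{G}}$ is a right Bousfield localization of ${\bf M}$, we have ${\rm F}^{\mathcal{G}}={\rm F}$, which makes condition~(2) automatic: for any map $X\rightarrow Y$ in ${\rm F}$ between $\mathcal{G}$-fibrant (= fibrant) objects and any object $K$ among the domains and codomains of the generating cofibrations of $\mathcal{V}$, the object $K$ is cofibrant in $\mathcal{V}$ (domains by hypothesis, codomains because they are targets of cofibrations out of cofibrant objects), and the $\mathcal{V}$-model structure on ${\bf M}$ yields $X^{K}\rightarrow Y^{K}$ in ${\rm F}={\rm F}^{\mathcal{G}}$. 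So the whole problem reduces to verifying condition~(1) of Proposition~2.9.

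For the direction $\Rightarrow$: if ${\bf M}^{\mathcal{G}}$ is a $\mathcal{V}$-model category, then given a $\mathcal{G}$-cofibrant object $A$ and an object $K$ that is a domain or codomain of a generating cofibration (hence cofibrant in $\mathcal{V}$), the pushout-product of $\emptyset\rightarrow A$ in ${\rm C}^{\mathcal{G}}$ with $\emptyset\rightarrow K$ in $\mathcal{V}$ is $\emptyset\rightarrow A\ast K$, which by the pushout-product axiom for ${\bf M}^{\mathcal{G}}$ lies in ${\rm C}^{\mathcal{G}}$; thus $A\ast K$ is $\mathcal{G}$-cofibrant.

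For the direction $\Leftarrow$: assume the stated condition, let $f: A\rightarrow B$ be in ${\rm C}^{\mathcal{G}}$, and let $J\rightarrow K$ be a generating cofibration of $\mathcal{V}$. The essential input is the characterization, valid in the right Bousfield localization context (cf.~\cite[Proposition 5.2.5]{Hi}), that a cofibration of ${\bf M}$ whose source and target are $\mathcal{G}$-cofibrant lies in ${\rm C}^{\mathcal{G}}$, together with the accompanying fact that the source and target of any $\mathcal{G}$-cofibration are $\mathcal{G}$-cofibrant. Granting this, $A$ and $B$ are $\mathcal{G}$-cofibrant, so by hypothesis the four objects $A\ast J$, $A\ast K$, $B\ast J$, $B\ast K$ are each $\mathcal{G}$-cofibrant. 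The map $A\ast J\rightarrow A\ast K$ is a cofibration in ${\bf M}$ by the pushout-product axiom in ${\bf M}$, and lies between $\mathcal{G}$-cofibrant objects, so by the characterization it belongs to ${\rm C}^{\mathcal{G}}$; its pushout $B\ast J\rightarrow P := A\ast K\cup_{A\ast J}B\ast J$ is in ${\rm C}^{\mathcal{G}}$ by closure under pushouts, making $P$ a $\mathcal{G}$-cofibrant object. Finally $P\rightarrow B\ast K$ is a cofibration in ${\bf M}$ between $\mathcal{G}$-cofibrant objects, so it too lies in ${\rm C}^{\mathcal{G}}$, establishing condition~(1) of Proposition~2.9.

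The main obstacle is the appeal to the characterization of $\mathcal{G}$-cofibrations in a right Bousfield localization as cofibrations in ${\bf M}$ between $\mathcal{G}$-cofibrant objects; once this standard fact from the cellular framework is granted, the verification above is a straightforward unwinding of the pushout-product axiom for ${\bf M}$ together with the hypothesis.
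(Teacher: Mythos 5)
Your overall strategy --- reduce to Proposition 2.9 and check its conditions (1) and (2) --- is reasonable, and two pieces are fine: condition (2) is indeed automatic when ${\rm F}^{\mathcal{G}}={\rm F}$, and your ``only if'' direction (tensoring $\emptyset\rightarrow A$ in ${\rm C}^{\mathcal{G}}$ with the cofibration $\emptyset\rightarrow K$) is correct. The genuine gap is in your verification of condition (1). You rest it on the ``accompanying fact'' that the source and target of any map of ${\rm C}^{\mathcal{G}}$ are $\mathcal{G}$-cofibrant; this is simply false: the identity of any object that is not $\mathcal{G}$-cofibrant lies in ${\rm C}^{\mathcal{G}}$, and in general a cofibration in any model structure need not have cofibrant domain. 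So your argument only establishes (1) for maps of ${\rm C}^{\mathcal{G}}$ with $\mathcal{G}$-cofibrant source and target, while Proposition 2.9 demands it for all of ${\rm C}^{\mathcal{G}}$, and you give no bridge between the two. The bridge is exactly where the paper's one-line proof (``similar to the proof of Proposition 2.9'') points: pass through the adjoint formulation. Since ${\rm C}^{\mathcal{G}}$ has the left lifting property against ${\rm F}\cap{\rm W}^{\mathcal{G}}$, it suffices to show that for $q\in{\rm F}\cap{\rm W}^{\mathcal{G}}$ and a generating cofibration $J\rightarrow K$ the map $X^{K}\rightarrow X^{J}\times_{Y^{J}}Y^{K}$ is again in ${\rm F}\cap{\rm W}^{\mathcal{G}}$; membership in ${\rm W}^{\mathcal{G}}$ is then detected, by the dual of \cite[Lemma 7.14]{JT} applied in ${\bf M}^{\mathcal{G}}$, by the right lifting property against maps of ${\rm C}^{\mathcal{G}}$ \emph{between $\mathcal{G}$-cofibrant objects} only --- this is how the restriction to cofibrant endpoints legitimately enters, rather than by asserting that all $\mathcal{G}$-cofibrations have cofibrant endpoints. (Note also that in a right Bousfield localization ${\rm C}^{\mathcal{G}}\cap{\rm W}^{\mathcal{G}}={\rm C}\cap{\rm W}$, which disposes of the acyclicity clauses of the pushout-product axiom for free.)

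A second problem is your appeal to Hirschhorn's characterization of colocal cofibrations. That statement is proved for the colocalization of a right proper cellular model category with respect to a \emph{set} of objects, whereas in this paper ``right Bousfield localization'' means nothing more than an $\ell\ell$-extension with ${\rm F}^{\mathcal{G}}={\rm F}$; the claim that a cofibration of {\bf M} between $\mathcal{G}$-cofibrant objects lies in ${\rm C}^{\mathcal{G}}$ is not a formal consequence of lifting properties in this generality (the usual proof is a homotopy-extension argument using the specific structure of the colocalization), so it cannot be imported off the shelf and would need its own justification or an added hypothesis. As it stands, then, both halves of your treatment of condition (1) --- the reduction to cofibrant endpoints and the lifting characterization you use once there --- are unsupported in the setting of the lemma.
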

\begin{proof}
The proof is similar to the proof of Proposition 2.9.
\end{proof}

\section{Application: Categories enriched 
over monoidal model categories}
In this section we give the following application 
of Theorem 1.5. Let $\mathcal{V}$ be a closed category.
We denote by $\mathcal{V}$-{\bf Cat} the category 
whose objects are the small $\mathcal{V}$-categories 
and whose morphisms are the $\mathcal{V}$-functors.
When $\mathcal{V}$ is a model category satisfying
certain assumptions, $\mathcal{V}$-{\bf Cat} admits 
the fibred model structure \cite[4.4]{St}. Under further
assumptions on $\mathcal{V}$, we shall exhibit in
Theorem 3.1 an $\ell\ell$-extension of the fibred 
model structure.
\\

Let $\mathcal{V}$ be a monoidal model category 
with cofibrant unit $I$. Let {\bf Cat} be the category 
of small categories. We have a functor 
$[\_]_{\mathcal{V}}:\mathcal{V}\text{-}{\bf Cat} 
\rightarrow \mathbf{Cat}$ obtained by change of 
base along the symmetric monoidal composite functor
\[
\xymatrix{
\mathcal{V} \ar[r] & Ho(\mathcal{V})
\ar[rr]^{Ho(\mathcal{V})(I,\_)} & & Set\\
}
\]
Let $\mathcal{K}$ be a class of 
maps of $\mathcal{V}$. We say that 
a $\mathcal{V}$-functor $f:\mathcal{A}
\rightarrow \mathcal{B}$ is {\bf locally in} 
$\mathcal{K}$ if for each pair
$x,y$ of objects of $\mathcal{A}$, the 
map $f_{x,y}:\mathcal{A}(x,y)\rightarrow 
\mathcal{B}(fx,fy)$ is in $\mathcal{K}$.
\begin{definition} Let $f:\mathcal{A}\rightarrow 
\mathcal{B}$ be a morphism in $\mathcal{V}$-{\bf Cat}.

1. The morphism $f$ is a {\bf weak equivalence} 
if $f$ is locally a weak equivalence of $\mathcal{V}$ 
and $[f]_{\mathcal{V}}:[\mathcal{A}]_{\mathcal{V}}
\rightarrow [\mathcal{B}]_{\mathcal{V}}$
is essentially surjective.

2. The morphism $f$ is a {\bf fibration} if

$(a)$ $f$ is locally a fibration of $\mathcal{V}$, 
and

$(b)$ for any $x\in Ob(\mathcal{A})$, and any 
isomorphism $v:y'\rightarrow [f]_{\mathcal{V}}(x)$ 
in $[\mathcal{B}]_{\mathcal{V}}$, there exists an 
isomorphism $u:x'\rightarrow x$ in 
$[\mathcal{A}]_{\mathcal{V}}$ such 
that $[f]_{\mathcal{V}}(u)=v$.

3. The morphism $f$ is called a {\bf cofibration} 
if it has the left lifting property with respect to 
the fibrations which are weak equivalences.
\end{definition}
We denote by ${\rm W}^{\mathcal{G}}$
the class of weak equivalences, by 
${\rm C}^{\mathcal{G}}$ the class of 
cofibrations and by ${\rm F}^{\mathcal{G}}$
the class of fibrations. It follows directly from 
the definitions that a $\mathcal{V}$-functor is
in ${\rm F}^{\mathcal{G}}\cap {\rm W}^{\mathcal{G}}$
if and only if it is surjective on objects and locally a 
trivial fibration of $\mathcal{V}$.

Let now $\mathcal{V}$ be as in \cite[4.2]{St}.
Then the category $\mathcal{V}$-{\bf Cat}
admits a weak factorization system 
$({\rm C}^{\mathcal{G}},{\rm F}^{\mathcal{G}}
\cap {\rm W}^{\mathcal{G}})$ and the fibred
model structure \cite[4.4]{St}. A $\mathcal{V}$-category 
$\mathcal{A}$ is cofibrant (fibrant) in the fibred 
model structure if and only if $\mathcal{A}$ is
cofibrant (fibrant) in the model structure on the
category of $\mathcal{V}$-categories with fixed 
set of objects $Ob(\mathcal{A})$, and the map from 
the initial $\mathcal{V}$-category to $\mathcal{A}$ 
(from $\mathcal{A}$ to the terminal $\mathcal{V}$-category)
is in ${\rm C}^{\mathcal{G}}$ (${\rm F}^{\mathcal{G}}$) 
if and only if $\mathcal{A}$ is cofibrant (fibrant) 
in the fibred model structure.
\begin{theorem}
Suppose furthermore that $\mathcal{V}$ is right proper
and for every locally fibrant $\mathcal{V}$-category
$\mathcal{A}$, there is a factorization
$$\mathcal{A}\overset{s}\rightarrow Path
\mathcal{A}\overset{p_{0}\times p_{1}}
\longrightarrow \mathcal{A}\times \mathcal{A}$$
of the diagonal map $\mathcal{A}\rightarrow 
\mathcal{A}\times \mathcal{A}$
such that $s$ is in ${\rm W}^{\mathcal{G}}$
and $p_{0}\times p_{1}$ is in ${\rm F}^{\mathcal{G}}$.
Then the classes ${\rm W}^{\mathcal{G}},
{\rm C}^{\mathcal{G}}$ and  
${\rm F}^{\mathcal{G}}$ form a right 
proper model structure on
$\mathcal{V}$-{\bf Cat}. 
\end{theorem}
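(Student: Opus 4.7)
The plan is to apply Theorem 1.5 to $\mathbf{M}=\mathcal{V}\text{-}\mathbf{Cat}$ equipped with the fibred model structure $({\rm W},{\rm C},{\rm F})$ of \cite[4.4]{St}. First I would record the required inclusions: ${\rm W}\subset {\rm W}^{\mathcal{G}}$ holds because a map that is bijective on objects and locally a weak equivalence is in particular essentially surjective and locally a weak equivalence; ${\rm F}^{\mathcal{G}}\subseteq {\rm F}$ holds because clause (b) in Definition 1 is an extra condition on top of local fibrancy; and consequently ${\rm C}^{\mathcal{G}}\subseteq {\rm C}$, since ${\rm F}\cap {\rm W}\subseteq {\rm F}^{\mathcal{G}}\cap {\rm W}^{\mathcal{G}}$ and both ${\rm C}$ and ${\rm C}^{\mathcal{G}}$ are defined by the corresponding left lifting properties.

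Next I would verify the eight hypotheses of Theorem 1.5 in turn. Axiom (1) reduces to two-out-of-three for weak equivalences of $\mathcal{V}$ applied hom-object-wise, combined with two-out-of-three for essentially surjective functors (a standard consequence of the functoriality of $[\_]_{\mathcal{V}}$). Axiom (2) follows from retract closure of each defining property; for ${\rm C}^{\mathcal{G}}$ it is automatic from its lifting definition. Axiom (3) uses that limits in $\mathcal{V}\text{-}\mathbf{Cat}$ are computed object-set-wise and hom-object-wise, so pullback and composition stability of local fibrations in $\mathcal{V}$ transfers, while the isofibration clause (b) is stable under composition and pullback by a routine argument at the level of the homotopy categories $[\_]_{\mathcal{V}}$. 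Axiom (4) is precisely the characterization of fibred fibrant objects recalled before the theorem. Axiom (5) is the standing hypothesis of the theorem. Axiom (7) is the very definition of ${\rm C}^{\mathcal{G}}$ in Definition 1(3). Axiom (8) is the weak factorization system $({\rm C}^{\mathcal{G}},{\rm F}^{\mathcal{G}}\cap {\rm W}^{\mathcal{G}})$ on $\mathcal{V}\text{-}\mathbf{Cat}$ from \cite[4.2]{St}.

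The main obstacle is axiom (6). Consider a pullback square in $\mathcal{V}\text{-}\mathbf{Cat}$ in which a fibred weak equivalence $f\colon \mathcal{Z}\to \mathcal{Y}$ is pulled back along a map $p\colon \mathcal{X}\to \mathcal{Y}$ in ${\rm F}^{\mathcal{G}}$, with $\mathcal{X}$ and $\mathcal{Y}$ fibrant in the fibred structure (in particular locally fibrant in $\mathcal{V}$). On objects the pullback is a pullback of sets, so a bijection pulls back to a bijection. On each hom-object the induced square is a pullback in $\mathcal{V}$ of a weak equivalence (from $f$) along a fibration (from $p$) whose codomain $\mathcal{Y}(-,-)$ is fibrant in $\mathcal{V}$; right properness of $\mathcal{V}$ then yields a weak equivalence hom-object-wise. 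Hence the pulled-back $\mathcal{V}$-functor lies in ${\rm W}$.

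With all eight hypotheses of Theorem 1.5 verified, its conclusion supplies the desired right proper model structure on $\mathcal{V}\text{-}\mathbf{Cat}$. The delicate point throughout is keeping the distinction between the (stronger) fibred fibrant condition and $\mathcal{G}$-fibrancy straight, and in particular verifying clause (b) of Definition 1 under pullback when checking axiom (3); once that bookkeeping is in place, the remaining items are formal or quoted from \cite{St}.
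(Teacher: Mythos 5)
Your proposal takes essentially the same route as the paper: apply Theorem 1.5 to $\mathcal{V}$-\textbf{Cat} with the fibred model structure, with (5) being the stated hypothesis, (6) coming from right properness of $\mathcal{V}$ checked on object sets and hom-objects, (7) being immediate from the definition of ${\rm C}^{\mathcal{G}}$, (8) being the weak factorization system $({\rm C}^{\mathcal{G}},{\rm F}^{\mathcal{G}}\cap{\rm W}^{\mathcal{G}})$ from \cite{St}, and (1)--(4) as routine verifications, which is exactly how the paper argues. One small caution: your parenthetical that essential surjectivity by itself satisfies two out of three is false (if $g$ and $gf$ are essentially surjective, $f$ need not be); the correct verification of (1) uses that a locally-weak-equivalence $g$ makes $[g]_{\mathcal{V}}$ fully faithful, so that isomorphisms are reflected, together with invariance of hom-objects under isomorphisms in $[\mathcal{B}]_{\mathcal{V}}$ for the remaining case --- but this only affects a detail the paper itself leaves as ``not difficult,'' not the structure of your argument.
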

\begin{proof}
We take in Theorem 1.5 the category 
{\bf M} to be $\mathcal{V}$-{\bf Cat} 
regarded as having the fibred model 
structure. It is not difficult to show that 
assumptions (1), (2), (3) and (4) hold in our 
case. We have already shown above that
assumptions (7) and (8) also hold. 
Assumption (6) holds since 
$\mathcal{V}$ is right proper
whereas (5) is incorporated
in the statement of the Theorem.
\end{proof}
We said at the beginning of Section 1 that
Bousfield's proofs of \cite[Theorems 3.3 and 12.4]{Bo}
are, after a certain stage, formal. Hence the proofs
of the duals of \emph{loc. cit.} are, after a certain 
stage, formal. This raises the question whether 
one really needs our Theorem 1.5 for the proof 
of Theorem 3.1. We notice that the dual of 
\cite[Lemma 2.5]{Bo} does not hold in our 
example. Explicitly, if $\mathcal{A}\overset{f}
\rightarrow\mathcal{B}\overset{g}\rightarrow 
\mathcal{C}$ are $\mathcal{V}$-functors with 
$gf$ a fibration in the sense of Definition 2.2
and $g$ a fibration in the fibred model structure, 
then $g$ is not neccesarily a fibration.
\\

In order to apply Theorem 3.1 one needs to 
eventually construct the required factorization
of the diagonal. This is not immediate. A study of closed 
categories for which this factorization is possible is beyond 
the scope of this paper. Nevertheless, examples include 
the categories of: small groupoids, small categories, 
compactly generated Hausdorff spaces, chain
complexes of $R$-modules, simplicial $R$-modules 
(where $R$ is a commutative ring), small $2$-categories 
and small $\mathcal{V}$-categories (where $\mathcal{V}$
is a locally presentable closed category). Work
of B. van den Berg and R. Garner \cite{BG} seems
to suggest a construction of the required factorization 
for the category of simplicial sets.
\\

{\bf Acknowledgements.} I would like to thank the Referee
for his or her suggestions and I. Amrani, M. Korbel\'{a}{\v{r}},
G. Raptis and O. Raventos for useful discussions about 
the material of this paper.

\end{document}